\tikzstyle{blackNode}=[fill=black, draw=black, shape=circle]
\numberwithin{equation}{section}
\newtheorem{theorem}{Theorem}[section]
\newtheorem{lemma}[theorem]{Lemma}
\newtheorem{corollary}[theorem]{Corollary}
\newtheorem{proposition}[theorem]{Proposition}
\newtheorem{thmx}{Theorem}
\newtheorem{meta-question}[mquex]{Meta-question}
\theoremstyle{definition}
\newtheorem{definition}[theorem]{Definition}
\newtheorem{example}[theorem]{Example}
\newtheorem*{examples*}{Examples}
\newtheoremstyle{TheoremNum}
{\topsep}{\topsep} 
{\itshape} 
{-0.25cm} 
{\bfseries} 
{.} 
{ }  
{\thmname{#1}\thmnote{ \bfseries #3}}
\theoremstyle{TheoremNum}
\DeclareMathOperator{\aut}{\mathrm{Aut}}
\DeclareMathOperator{\out}{\mathrm{Out}}
\DeclareMathOperator{\im}{\mathrm{im}}
\DeclareMathOperator{\id}{\mathrm{Id}}
\DeclareMathOperator{\ad}{ad}
\DeclareMathOperator{\diag}{diag}
\DeclareMathOperator{\mcg}{MCG}
\newcommand{\GL}{\mathrm{GL}}
\newcommand{\SL}{\mathrm{SL}}
\newcommand{\gen}[1]{\langle #1 \rangle}
\newcommand{\pres}[2]{\langle #1\ |\ #2 \rangle}
\DeclareMathOperator{\lcm}{\mathrm{lcm}}
\newcommand{\N}{\mathbb{N}}
\newcommand{\Z}{\mathbb{Z}}
\newcommand{\cD}{\mathcal{D}}
\newcommand{\sub}{\subset}
\title[Lack of profinite rigidity among `by-free' extensions]{Lack of profinite rigidity among extensions with free quotient}
\author{Pawe\l\ Piwek}
\date{November 2023}
\address{Mathematical Institute, Andrew Wiles Building, Observatory Quarter, University of Oxford, Oxford OX2 6GG, UK}
\email{pawel.piwek@maths.ox.ac.uk}
\begin{document}
	
\newpage
	
\maketitle

\begin{abstract}
	We present a construction that yields
	infinite families of non-isomorphic
	semidirect products $N \rtimes F_m$
	sharing a specified profinite completion.
	Within each family, $m \ge 2$ is constant
	and $N$ is a fixed group.
	For $m=2$ we can take
	$N$ to be free of rank $\ge 10$,
	free abelian of rank $\ge 12$,
	or a surface group of genus $\ge 5$.
\end{abstract}
	
	\section{Introduction}\label{sec:intro}
	
	\subsection{Context and related results}\label{ssec:context}
	
	The study of profinite rigidity properties
	of residually finite groups
	is a highly active area of group theory.
	Before stating the original results of this paper,
	we describe two particular
	interconnected directions in this research.
	The first one concerns the search for `large' families
	of non-isomorphic residually finite groups
	whose profinite completions are isomorphic.
	The second concerns the challenge of determining when
	a given family of \mbox{$N$-by-$Q$} extensions
	can be distinguished one from another
	by their profinite completions.
	
	\subsubsection{Profinitely isomorphic families}
	
	Simplifying earlier work \cite{serre1964exemples} of Serre,
	in \cite{baumslag1974residually}
	Baumslag gave an example of
	two non-isomorphic semidirect products ${\left(\Z/25\right) \rtimes \Z}$
	which have the same finite quotients.
	Many other families
	of non-isomorphic residually finite groups
	sharing profinite completions
	were discovered at about the same time,
	but these early examples were polycyclic
	and the families were finite.
	Indeed, virtually-polycyclic groups  
	were shown in \cite{grunewald1979finiteness}
	to have finite profinite genus.
	
	Pickel gave in \cite{pickel1974metabelian}
	an infinite family
	of non-isomorphic finitely presented metabelian groups
	which all shared profinite completions.
	His construction involved
	finding $R$-modules $M_i$,
	which have the same finite quotients (as $R$-modules),
	and then forming $G_i := M_i \rtimes A$
	for a subgroup $A < R^\times$.
	
	An important direction of research was
	finding so called \emph{Grothendieck pairs}:
	pairs $H \xhookrightarrow{\iota} G$
	of residually-finite groups
	such that $\widehat{H} \xhookrightarrow{\widehat{\iota}} \widehat{G}$
	is an isomorphism,
	but $\iota$ itself isn't.
	In 1990 Platonov and Tavgen---%
	building on the earlier work
	\cite{baumslag1984subgroups}
	of Baumslag and Roseblade---%
	showed in
	\cite{platonov1990grothendieck}
	that a product of two
	nonabelian free groups
	contains uncountably many
	pairwise non-isomorphic subgroups
	inducing Grothendieck pairs;
	furthermore, infinitely many
	of them are finitely generated,
	but none is finitely presented.
	Grothendieck pairs of finitely presented groups
	were later constructed in \cite{bridson2004grothendieck}.
	Subsequently,
	an infinite family of Grothendieck pairs
	$H_i \hookrightarrow \Gamma \times \Gamma$
	with $H_i$ finitely presented
	and $\Gamma$ hyperbolic was later constructed
	in \cite{bridson2016strong}.
	For a modern discussion           
	see Bridson's essay \cite{bridson2022profinite}.
	
	In 2014 Nekrashevych gave in \cite{nekrashevych2014uncountable} 
	a family of finitely generated residually finite
	branch groups which all share profinite completions,
	but this family also contains uncountably many word-growth types,
	so in particular, uncountably many isomorphism types.
	Recently, \cite{kionke2021amenability}
	gave another family $\{G_i\}_{i\in I}$
	of pairwise non-isomorphic branch groups, all containing $F_2$
	and an amenable branch group $A$
	such that $\iota_i : A \hookrightarrow G_i$
	give Grothendieck pairs.
	
	\subsubsection{Profinite rigidity properties and group extensions}
	
	Many of the results concerning profinite rigidity
	(or the lack thereof)
	relate closely to the question
	of whether different group extensions
	with a fixed kernel $N$
	and a fixed quotient $Q$
	are profinitely isomorphic.
	In fact, the early examples of Baumslag
	\cite{baumslag1974residually}
	and of Pickel \cite{pickel1974metabelian}
	are precisely of this `flavour'.
	
	A survey covering many such results
	in the context of virtually free abelian
	and virtually free groups is the 2011 paper \cite{grunewald2011genus}
	of Grunewald and Zalesskii.
	In particular, it shows that the genus
	of any virtually-(finitely generated free) group is finite within this class
	and in some cases determines the genus explicitly,
	thereby giving interesting examples of non-isomorphic
	but profinitely isomorphic virtually free groups
	and also conditions which force the genus to be size one.
	The survey also provides a key ingredient of the present paper
	-- \cref{semidirect_products_profinitely_isomorphic}.
	
	Many of the efforts to distinguish $3$-manifold groups
	by their profinite completions
	also involve the study of extensions.
	The families of SOL manifolds given in \cite{funar2013torus} in 2013 by Funar
	have fundamental groups of the form $\Z^2 \rtimes \Z$
	and the fact that there are non-isomorphic pairs among them
	whose profinite completions are isomorphic
	comes from the existence of non-conjugate matrices
	in $\SL_2(\Z)$ that are conjugate
	in $\SL_2(\widehat \Z)$, which was shown
	in \cite{stebe1972conjugacy} in 1972 by Stebe.
	In 2017 Bridson, Reid and Wilton showed
	in \cite{bridson2017profinite}
	that non-isomorphic extensions $F_2 \rtimes \Z$
	are profinitely distinguished from each other.
	In the same year Wilkes showed in \cite{wilkes2017profinite} that
	the fundamental groups of Seifert Fibred Spaces---%
	which are extensions of $\Z$ by $2$-orbifold groups---%
	are profinitely distinguished from each other
	with the exception of the examples given by Hempel in 2014 in \cite{hempel2014some}.
	Later, the present author extended this classification
	to central extensions
	with kernel $\Z^n$ and quotient being an $2$-orbifold group
	in \cite{piwek2023profinite}.
	
	It should be noted that there are
	at least two separate phenomena causing lack of rigidity
	in the described examples.
	One is that the `induced actions' on the kernel
	of the extension can be non-conjugate in the discrete setting,
	but become conjugate after passing to the profinite completions.
	These include the families
	$(\Z/25) \rtimes \Z$, $\Z^2 \rtimes \Z$
	and many of the examples of \cite{grunewald2011genus}.
	The other phenomenon is when the the actions are the same
	(e.g. the extensions might both be central),
	but the extensions are distinct as
	the cocycles defined by them
	lie in distinct orbits of the action
	of the quotient group
	on the relevant second cohomology group,
	but they lie in the same orbit
	when we pass to profinite cohomology.
	This last part explains the examples of Hempel
	and of \cite{piwek2023profinite}.
	
	The method for producing profinitely isomorphic families
	which we give in this article
	is underpinned by the first phenomenon.
	
	\subsection{Original results}\label{ssec:original}
	
	The present paper studies
	families of semidirect products $N \rtimes F_m$
	of a fixed group $N$
	with a non-abelian free group $F_m$
	which are pairwise non-isomorphic,
	but whose profinite completions are isomorphic.
	
	\cref{mainthmintro:general}
	provides a general `recipe' for constructing such examples
	given a non-free subgroup ${T < \aut(N)}$
	with multiple surjections
	${F_m \twoheadrightarrow \overline T < \out(N)}$
	which are non-equivalent under the action of $\aut(F_m) \times \out(N)$.
	An example of when this construction can be applied
	is when $N$ is a free group,
	a surface group or a free abelian group.
	\cref{mainthmintro:infinite_genus_families}
	makes this explicit.
	
	\begin{thmx}\label{mainthmintro:general}
		Let $N$ be a
		finitely generated,
		residually finite group
		such that
		the centralisers $C_N(M)$
		of non-trivial normal subgroups $M \triangleleft N$
		don't contain a non-abelian free group.
		
		Let $T < \aut(N)$ be a subgroup which isn't free.
		Let $m \ge 2$ and $\{\varphi_i\colon F_m \twoheadrightarrow T\}$
		be a family of surjective homomorphisms such that,
		setting $\overline{\varphi_i}$ to be the composition of $\varphi_i$
		with ${\nu\colon \aut(N) \twoheadrightarrow \out(N)}$
		and $\overline T = \nu(T)$,
		we have
		\begin{equation*}
			\delta \circ \overline{\varphi_i} = \overline{\varphi_j} \circ \varepsilon\
			\text{for some $\delta \in \aut(\overline T)$,
				$\varepsilon \in \aut(F_m)$}\
			\iff\ i = j.
		\end{equation*}
		
		Then the groups $G_i := N \rtimes_{\varphi_i} F_m$
		are residually-finite
		and pairwise non-isomorphic,
		while having isomorphic profinite completions.
	\end{thmx}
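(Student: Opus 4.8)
The plan is to establish the three conclusions of the statement separately; the last one is where all the difficulty lies. Throughout, write $N_i \le G_i$ for the normal copy of $N$, so $G_i/N_i \cong F_m$.

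\emph{Residual finiteness.} I would use that $F_m$ is residually finite and that, $N$ being finitely generated and residually finite, each finite-index subgroup of $N$ contains a characteristic one of finite index. For a nontrivial element $g=(n,w)\in G_i$: if $w\ne 1$ its image in $F_m$ is nontrivial; if $w=1$ and $n\ne 1$, pick a characteristic finite-index $N_0\le N$ with $n\notin N_0$. Since $N_0$ is characteristic in $N\triangleleft G_i$ it is normal in $G_i$, and $G_i/N_0\cong (N/N_0)\rtimes F_m$ surjects onto the finite group $(N/N_0)\rtimes Q$, where $Q$ is the image of $F_m$ in $\aut(N/N_0)$, and $g$ survives because $n\notin N_0$.

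\emph{Isomorphic profinite completions.} Here I would appeal to \cref{semidirect_products_profinitely_isomorphic}, whose hypotheses hold because every $\varphi_i$ is a surjection onto the single fixed subgroup $T<\aut(N)$ and $F_m$ is free. Freeness of $F_m$ is essential: it supplies the extra flexibility — a Gaschütz-type phenomenon for free profinite groups, under which any two epimorphisms of $\widehat{F_m}$ onto a fixed profinite quotient differ by an automorphism of $\widehat{F_m}$ — needed to match up the different actions after completion, and the cited result then yields $\widehat{G_i}\cong\widehat{G_j}$ for all $i,j$.

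\emph{Pairwise non-isomorphism.} Suppose $\Phi\colon G_i\xrightarrow{\ \sim\ } G_j$; I want to deduce $i=j$. The crucial step is to show $\Phi(N_i)=N_j$. Now $\Phi(N_i)$ is a finitely generated normal subgroup of $G_j$ with $G_j/\Phi(N_i)\cong F_m$ (transported along $\Phi$), so its image in $G_j/N_j\cong F_m$ is a finitely generated normal subgroup of a free group, hence — by the theorem that such a subgroup is trivial or of finite index — trivial or of finite index; symmetrically for the image of $N_j$ in $G_j/\Phi(N_i)\cong F_m$. A quick check rules out the mixed possibility, so either both images are trivial, giving $\Phi(N_i)=N_j$ outright, or both have finite index. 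The finite-index case is the real obstacle. Setting $M:=\Phi(N_i)\cap N_j$, one finds $M\triangleleft G_j$, $[\Phi(N_i),N_j]\le M$, and $\Phi(N_i)N_j/M\cong(\Phi(N_i)/M)\times(N_j/M)$ a finite-index subgroup of $G_j$ that is a product of two non-abelian free groups. Since $N$ is not free — the abelian, surface and remaining free cases being treated together, with the hypothesis that $T$ is not free disposing of the residual degeneracy — the group $M$ is a \emph{nontrivial} normal subgroup of $N\cong N_j$; one then shows the normal subgroup $C_{G_j}(M)\triangleleft G_j$ contains a non-abelian free group centralising $M$, contradicting the hypothesis on centralisers of nontrivial normal subgroups of $N$. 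I expect this case analysis to be the main technical work.

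Granted $\Phi(N_i)=N_j$, the conclusion is formal. Then $\Phi$ restricts to an automorphism $\theta$ of $N$ and descends to an automorphism $\varepsilon$ of $F_m=G_j/N_j$; comparing the two semidirect-product structures shows that $\theta$ conjugates $\varphi_i$ into $\varphi_j\circ\varepsilon$ up to inner automorphisms of $N$ — the indeterminacy coming from the choice of set-theoretic section, which is precisely why one must pass to $\out(N)$. Hence $\theta$ normalises $T$, induces some $\delta\in\aut(\overline T)$, and $\delta\circ\overline{\varphi_i}=\overline{\varphi_j}\circ\varepsilon$; by the standing hypothesis on the family $\{\varphi_i\}$ this forces $i=j$, completing the argument.
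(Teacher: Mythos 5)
Your first two parts are sound and essentially follow the paper: the residual finiteness argument is a direct version of what the paper extracts from \cref{semidirect_after_completion}, and the profinite isomorphism is exactly the paper's appeal to \cref{semidirect_products_profinitely_isomorphic} via Gaschütz. You also correctly flag (more carefully than the paper does at the analogous spot) that the comparison of actions obtained from an isomorphism only holds in $\out(N)$, which is precisely why the hypothesis is phrased there.

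The gap is in showing $\Phi(N_i)=N_j$ in the finite-index case. You claim $M:=\Phi(N_i)\cap N_j\ne 1$ ``since $N$ is not free'', but $N$ free is one of the intended applications (Theorem~B allows $N=F_n$), so this step is not available; if $M=1$ and $N$ is free nothing you have said produces a contradiction. Your parenthetical that ``$T$ not free disposes of the residual degeneracy'' is pointing in the right direction but is not an argument: the actual role of $T$ not being free is that it forces $\ker\varphi_j\ne 1$ (otherwise $T\cong F_m$ would be free), so $1\rtimes\ker\varphi_j$ is a non-abelian free subgroup centralising $N_j$. In the paper this is exactly what makes the fourth condition of Theorem~\cref{characteristic} (``$C_G(M)$ contains a non-abelian free group'') a genuine isomorphism-invariant property of $N\rtimes 1$; you never invoke any such free centraliser. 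Second, even granting $M\ne 1$, you assert that $C_{G_j}(M)$ contains a non-abelian free group and that this ``contradicts the hypothesis on centralisers of nontrivial normal subgroups of $N$'' --- but the hypothesis concerns $C_N(M)$, not $C_{G_j}(M)$, and a free subgroup of $C_{G_j}(M)$ could easily lie outside $N_j$. The paper closes exactly this gap in Step 5 of the proof of Theorem~\cref{characteristic}: it works with $L:=(1\rtimes F_m)\cap(\text{candidate})$, which is a \emph{nontrivial normal subgroup of the free group} $1\rtimes F_m$, hence has trivial centraliser there, forcing $C_G(\gen{\gen{L}}_G)$ into $N$; a commutator computation via \cref{normal_closure_in_semidirect} then produces a nontrivial normal $D\triangleleft N$ with $C_G(\gen{\gen{L}}_G)\le C_N(D)$. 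Your $M=\Phi(N_i)\cap N_j$ is not an analogue of either $L$ or $D$ --- it lives entirely inside $N_j$ and does not interact with the semidirect structure in the way the argument needs. As written, the crucial implication ``finite-index case leads to a contradiction'' is not established.
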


	\begin{thmx}\label{mainthmintro:infinite_genus_families}
		Let $N$ be one of the following.
		\begin{itemize}
			\item Free group $F_n$ with $n \ge 10$.
			\item Fundamental group $\Sigma_n$
			of a closed orientable surface with genus $n \ge 5$, or
			\item Free abelian group $\Z^n$ with $n \ge 12$.
		\end{itemize}
		Then there exists an infinite family
		of non-isomorphic groups
		$G_i := N \rtimes_{\varphi_i} F_2$
		having isomorphic profinite completions.
		These groups are of type $F$.
	\end{thmx}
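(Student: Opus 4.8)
The plan is to deduce \cref{mainthmintro:infinite_genus_families} from \cref{mainthmintro:general}. For each choice of $N$ in the list one must supply the input that theorem requires: verify that the centraliser of every non-trivial normal subgroup of $N$ contains no non-abelian free group; exhibit a non-free subgroup $T<\aut(N)$; and exhibit infinitely many surjections $\varphi_i\colon F_2\twoheadrightarrow T$ no two of which satisfy $\delta\circ\overline{\varphi_i}=\overline{\varphi_j}\circ\varepsilon$ for some $\delta\in\aut(\overline T)$, $\varepsilon\in\aut(F_2)$. Granting these, \cref{mainthmintro:general} at once delivers residually finite, pairwise non-isomorphic groups $G_i=N\rtimes_{\varphi_i}F_2$ with a common profinite completion, and one is then left only with the assertion that the $G_i$ have type $F$.

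The centraliser hypothesis costs nothing. When $N=\Z^n$ it is abelian, so every centraliser equals $\Z^n$. When $N=F_n$ with $n\ge 10$ or $N=\Sigma_n$ with $n\ge 5$, the group $N$ is torsion-free and word-hyperbolic, so the centraliser of any non-trivial element is infinite cyclic; for $1\neq M\triangleleft N$ pick $1\neq g\in M$ and note $C_N(M)\le C_N(g)\cong\Z$. In all cases the relevant centralisers are abelian, hence contain no copy of $F_2$.

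The real content is the choice of $T$ and of the family $\{\varphi_i\}$. For each $N$ one needs a finitely generated, residually finite, $2$-generated group $T<\aut(N)$ which is \emph{not} free, such that $\nu|_T$ is injective (so $\overline T\cong T$) and such that $T$ carries infinitely many generating pairs that remain pairwise distinct modulo Nielsen moves (the $\aut(F_2)$-action) and modulo $\aut(T)$; equivalently, the surjections $F_2\twoheadrightarrow T$ fall into infinitely many $\aut(T)\times\aut(F_2)$-orbits. It is possible to use one and the same $T$ in all three cases — a natural candidate being a suitably generic geometric free-by-cyclic group $T=F_k\rtimes_\phi\Z$: such a group is never free (a non-trivial finitely generated normal subgroup of a free group has finite index, whereas $F_k\triangleleft T$ has infinite quotient), it is the fundamental group of a $3$-manifold and hence virtually special and linear over $\Z$, and it embeds into $\aut(F_{k+1})$ via the point-stabiliser $F_k\rtimes\aut(F_k)$ and into the relevant mapping class groups via point-pushing. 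The three numerical thresholds $10$, $5$, $12$ record the least ranks for which such embeddings are simultaneously available (the last being the most delicate, as it demands a faithful low-dimensional integral representation of $T$). The crucial — and genuinely nontrivial — requirement on $\phi$ is the existence of infinitely many essentially inequivalent generating pairs of $T$. Feeding $T$ and $\{\varphi_i\}$ into \cref{mainthmintro:general} then produces the families $G_i=N\rtimes_{\varphi_i}F_2$.

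Finally, type $F$: each of $F_n$, $\Sigma_n$, $\Z^n$ admits a finite aspherical CW-complex as classifying space (a graph, the surface itself, the $n$-torus), as does $F_2$, and an extension of a group of type $F$ by a group of type $F$ is again of type $F$; hence every $G_i=N\rtimes_{\varphi_i}F_2$ has type $F$. The main obstacle in the whole argument is the second property of $T$: manufacturing a single $2$-generated non-free group with infinitely many $\aut$-classes of generating pairs, while keeping it small enough to embed into $\aut(F_n)$, $\aut(\Sigma_n)$ and $\GL_n(\Z)$ for $n$ no larger than $10$, $5$ and $12$ respectively — it is the tension between these two requirements that pins down those three bounds.
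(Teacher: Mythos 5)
Your high-level strategy is exactly the paper's: reduce to \cref{mainthmintro:general}, verify the centraliser hypothesis, supply a non-free $T<\aut(N)$ with infinitely many $\aut(T)\times\aut(F_2)$-inequivalent generating pairs such that $T$ injects into $\out(N)$, and finally settle type $F$. Your centraliser verification and your type-$F$ argument are both correct (and the centraliser argument is actually spelled out in more detail than the paper bothers to give). However, there is a genuine gap in the middle: you never actually produce a $T$ with the required properties, and the candidate you gesture at is not the one the paper uses and would need nontrivial work to substitute.

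Concretely, the paper takes $T = T(3,3) = \langle x,y \mid x^3=y^3\rangle$, a torus link group. This is not free-by-$\Z$: it has nontrivial centre $\langle x^3\rangle$, whereas a nontrivial $F_k\rtimes\Z$ with $k\ge 2$ has trivial centre. (The paper shows instead in \cref{T_is_free_by_cyclic} that $T(3,3)$ is an $(F_2\times\Z)$-by-$(\Z/3)$ extension.) The reason this choice works is that the key step you flag as ``genuinely nontrivial'' --- infinitely many $\aut(T)\times\aut(F_2)$-classes of generating pairs --- is a known theorem for torus link groups, by Zieschang's Nielsen-equivalence analysis together with Schreier's description of $\aut(T(p,q))$; the paper records this as \cref{torus_links_T_systems}. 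For a generic $F_k\rtimes\Z$ you neither prove nor cite the analogous statement, and without it \cref{mainthmintro:general} cannot be invoked. (The paper does note in passing that hyperbolic $2$-bridge knot groups --- which \emph{are} free-by-cyclic and do have infinitely many Nielsen systems by Heusener--Weidmann --- would also serve, but it discards them precisely because the resulting rank bounds are worse and less explicit.) Likewise, the embeddings and hence the thresholds $10$, $5$, $12$ are not loose facts about ``least ranks for which embeddings are available''; they come from a specific device: the Universal Embedding Theorem puts $T(3,3)$ inside the wreath product $(F_2\times\Z)\wr(\Z/3)$, and the paper then constructs that wreath product explicitly inside $\aut(F_9)\hookrightarrow\out(F_n)$ for $n\ge 10$, inside $\mcg(S_n)\le\out(\Sigma_n)$ for $n\ge 5$ via three disjoint pairs of Dehn twists permuted by an order-$3$ symmetry, and inside $\GL_{12}(\Z)$ via $2\times 2$ block matrices. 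Your ``point-stabiliser'' and ``point-pushing'' suggestions would need to be made precise, shown to be injective on $T$, and shown to land injectively in $\out(N)$ rather than just $\aut(N)$, none of which is done. So the proposal is a correct outline with the load-bearing ingredients missing.
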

	
	\subsection{How this paper is structured}\label{ssec:structure}
	
	The remainder of this article is structured as follows.
	
	\cref{sec:profinite_gps} introduces the results
	related to profinite groups used later in the article.
	Similarly, \cref{sec:group_extensions} introduces
	the background on group extensions, specifically
	distinguishing their isomorphism types in \cref{ssec:extensions_non-isomorphic}
	and giving a method for proving they are isomorphic
	in \cref{ssec:extensions_isomorphic}.
	
	\cref{sec:detecting_isomorphisms} introduces
	a criterion for `characterising'
	the subgroups $N \rtimes 1$
	in semidirect products $N \rtimes F_m$,
	using centralisers of normal subgroups.
	
	\cref{sec:T-systems} discuses the topic of
	Nielsen-equivalence and T-equivalence,
	citing in \cref{ssec:torus_link_groups} results
	on torus links having infinitely
	many non-equivalent T-systems.
	\cref{ssec:finding_torus_link_gps} explains then
	how to find the torus links as subgroups of
	outer automorphism groups of free groups, surface groups
	and free abelian groups.
	
	Finally, \cref{sec:final_results} states and proves
	the main results.
	
	\subsection*{Acknowledgements}
	
	The author is thankful to his PhD supervisor,
	Martin Bridson, for numerous helpful conversations and suggestions.
	
	This work was supported by the Mathematical Institute Scholarship
	of University of Oxford.
	
	\section{Auxiliary results on profinite groups}\label{sec:profinite_gps}
	
	\Cref{semidirect_after_completion} shows
	that (in the case of finitely generated kernel)
	a profinite completion of a semidirect product
	is the semidirect product of the profinite completions.
	
	\begin{proposition}\label{semidirect_after_completion}
		Let $N$	and $Q$ be any groups.
		
		Then $\widehat{N \rtimes Q} \cong \overline{N} \rtimes \widehat{Q}$,
		where $\overline{N}$ is the closure of the image of $N$ in $\widehat{N \rtimes Q}$.
		
		Furthermore, if $N$ is finitely generated, then $\overline{N} \cong \widehat{N}$,
		and if $N$ and $Q$ are residually finite, then so is $N \rtimes Q$.
	\end{proposition}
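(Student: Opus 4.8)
The plan is to build the decomposition of $\widehat{N\rtimes Q}$ from the canonical split extension $1\to N\to N\rtimes Q\to Q\to 1$ with retraction $p$ and section $\iota\colon Q\hookrightarrow G$, writing $G:=N\rtimes Q$, by applying the profinite completion functor and analysing the resulting kernel. Functoriality yields continuous homomorphisms $\widehat p\colon\widehat G\to\widehat Q$ and $\widehat\iota\colon\widehat Q\to\widehat G$ with $\widehat p\circ\widehat\iota=\id$; hence $\widehat\iota$ is injective, so---being a continuous injection of compact Hausdorff groups---it is a closed embedding, while $\widehat p$ is surjective because its image is a closed subgroup of $\widehat Q$ containing the dense image of $Q$. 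A standard check then gives the internal decomposition $\widehat G=\ker\widehat p\rtimes\widehat\iota(\widehat Q)\cong\ker\widehat p\rtimes\widehat Q$.

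Next I would identify $\ker\widehat p$ with $\overline N$. Since $p$ kills $N$, the image of $N$ lies in the closed subgroup $\ker\widehat p$, so $\overline N\le\ker\widehat p$; and $\overline N\triangleleft\widehat G$, being the closure of a subgroup normal in the dense image of $G$. For the reverse inclusion it suffices to produce an isomorphism $\widehat G/\overline N\cong\widehat Q$ through which $\widehat p$ factors: the composite $G\to\widehat G\to\widehat G/\overline N$ kills $N$, hence factors through $Q$ with dense image and extends to a (surjective) map $\widehat Q\to\widehat G/\overline N$ by the universal property of $\widehat Q$; conversely $\widehat p$ kills $\overline N$ and factors as $\beta\colon\widehat G/\overline N\to\widehat Q$; the two maps compose to the identity on the dense image of $Q$, so they are mutually inverse, and then $\widehat p=\beta\circ(\text{quotient by }\overline N)$ with $\beta$ an isomorphism forces $\ker\widehat p=\overline N$. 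This establishes $\widehat{N\rtimes Q}\cong\overline N\rtimes\widehat Q$.

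For the remaining assertions, assume $N$ finitely generated. The natural map $\widehat N\to\widehat G$ has image exactly $\overline N$ (compact, hence closed, and dense there), so one only needs injectivity, i.e.\ that the subgroups $N\cap G_0$ with $G_0\triangleleft G$ of finite index are cofinal among the finite-index normal subgroups of $N$. Given $N_0\triangleleft N$ of finite index, finite generation of $N$ forces there to be only finitely many subgroups of that index, so their intersection $N_1$ is a finite-index \emph{characteristic} subgroup of $N$ contained in $N_0$; being characteristic it is $Q$-invariant, so $N_1\rtimes Q\le G$ has finite index, and its normal core $G_0:=\Core_G(N_1\rtimes Q)$ is a finite-index normal subgroup of $G$ with $N\cap G_0\le N\cap(N_1\rtimes Q)=N_1\le N_0$; hence $\overline N\cong\widehat N$. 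Residual finiteness of $N\rtimes Q$ then follows: under $\widehat G\cong\overline N\rtimes\widehat Q$, the completion map sends $nq$ to the product of the image of $n$ in $\overline N$ and the image of $q$ in $\widehat Q$ (by naturality of $\widehat p$ and $\widehat\iota$), and this is trivial only if $q=1$ (as $Q$ is residually finite) and, via $\overline N\cong\widehat N$, $n=1$ (as $N$ is residually finite); equivalently, an element outside $N$ is detected by $p$ followed by a finite quotient of $Q$, while a nontrivial $n\in N$ survives in $G/G_0$ for the $G_0$ above. I expect the cofinality step to carry all the content---choosing the characteristic finite-index subgroup and taking its core in $G$---the rest being formal manipulation of universal properties; note that finite generation of $N$ enters essentially here (and hence in the residual-finiteness clause), examples such as $S_3\wr\Z$ showing that the final statement can fail without it.
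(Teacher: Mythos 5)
Your argument is correct and follows essentially the same route as the paper: apply the profinite completion functor to the split short exact sequence, deduce the split decomposition $\widehat{G}\cong\overline{N}\rtimes\widehat{Q}$, and use the cofinality of finite-index characteristic subgroups of $N$ (available because $N$ is finitely generated) to show the subspace topology on $N$ inside $\widehat{G}$ is the full profinite topology, so $\overline{N}\cong\widehat{N}$; residual finiteness then follows by embedding $N\rtimes Q$ into $\widehat{N}\rtimes\widehat{Q}$. The only real difference is presentational: where the paper invokes \cite[Proposition~3.2.5]{ribes2000profinite} for the exactness of profinite completion in this split setting, you prove the identification $\ker\widehat{p}=\overline{N}$ by hand from universal properties, and your side remark (that $S_3\wr\Z$ shows the residual-finiteness clause genuinely needs $N$ finitely generated) is a correct and welcome addition.
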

	
	\begin{proof}
		Let
		\begin{tikzcd}[column sep=1.5em]
			1 \arrow{r} & N \arrow{r}{\iota} & N\rtimes Q \arrow{r}{\pi} & Q \arrow{r} \arrow[bend right=45, swap]{l}{s} & 1
		\end{tikzcd}
		be a short exact sequence with $\pi\circ s = \id_Q$.
		Profinite completion turns this into another short exact sequence
		\begin{equation*}
			\begin{tikzcd}[column sep=2em]
				1 \arrow{r} & \overline{N} \arrow{r}{\iota'} &
				\widehat{N\rtimes Q} \arrow{r}{\widehat{\pi}} &
				\widehat{Q} \arrow{r} \arrow[bend right=45, swap]{l}{\widehat{s}} & 1
			\end{tikzcd}
		\end{equation*}
		with $\widehat{\pi}\circ \widehat{s} = \widehat{\id_Q} = \id_{\widehat{Q}}$
		-- for details see \cite[Proposition 3.2.5.]{ribes2000profinite}.
		This is a split short exact sequence,
		so $\widehat{N \rtimes Q} \cong \overline{N} \rtimes \widehat{Q}$.
		
		Now, if $N$ is finitely generated, then it has a cofinal sequence $N_1>N_2>\ldots$
		of finite-index characteristic subgroups.
		We have a sequence of subgroups $N_i \rtimes Q$
		of finite index in $N \rtimes Q$,
		such that for any $M < N$ of finite index,
		$(N_i \rtimes Q) \cap N < M$ for some $i$.
		Thus, the topology induced by $\widehat{N \rtimes Q}$ on the image of $N$
		is the full profinite topology and so $\overline{N} \cong \widehat{N}$.
		
		Finally, if $N$ and $Q$ are residually-finite,
		then they canonically embed in their profinite completions
		and so also $N \rtimes Q$ embeds into $\widehat{N \rtimes Q} \cong \widehat{N} \rtimes \widehat{Q}$
		via the natural map, so $N \rtimes Q$ is residually-finite.
	\end{proof}
	
	Gaschutz' Lemma for profinite groups
	will be crucial for us
	ensuring in \cref{semidirect_products_profinitely_isomorphic}
	that there are isomorphisms between the profinite completions
	of the groups in the families we construct.
	\Cref{gaschutz} and \cref{Nielsen_profinite}
	are respectively \cite[Lemma 4.2.]{jarden1975elementary}
	and \cite[Lemma 4.3.]{jarden1975elementary}.
	
	\begin{lemma}[Gaschutz' Lemma]\label{gaschutz}
		Let $\Gamma$ and $\Delta$ be profinite groups
		such that
		${\Gamma = \overline{\gen{\gamma_1, \ldots, \gamma_d}}}$
		for some $\gamma_1, \ldots, \gamma_d \in \Gamma$,
		and let $p: \Gamma \twoheadrightarrow \Delta$
		be a surjective homomorphism of profinite groups.
		
		Then given any $(\delta_1, \ldots, \delta_d) \in \Delta^d$
		such that $\overline{\gen{\delta_1, \ldots, \delta_d}} = \Delta$,
		there exists $(\gamma_1', \ldots, \gamma_d') \in \Gamma^d$
		such that $p(\gamma_i') = \delta_i$ for $i = 1, \ldots, d$
		and such that $\overline{\gen{\gamma_1', \ldots, \gamma_d'}} = \Gamma$.
	\end{lemma}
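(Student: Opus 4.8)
The plan is to bootstrap from the classical \emph{finite} Gaschütz lemma and then run a compactness argument over the finite continuous quotients of $\Gamma$; so the proof splits into a finite part and a limiting part. (Since the statement is exactly \cite[Lemma 4.2.]{jarden1975elementary}, one legitimate option is to cite it, but here is the argument.)

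\textbf{Finite part.} First I would invoke the finite Gaschütz lemma: if $G$ is a finite $d$-generated group and $q\colon G\twoheadrightarrow H$ is surjective, then every generating $d$-tuple of $H$ lifts through $q$ to a generating $d$-tuple of $G$. If one does not wish to cite this, it is reproved by the usual counting argument: for a finite group $K$ let $\phi_d(K)$ be the number of generating $d$-tuples of $K$. Every generating $d$-tuple of $G$ projects to one of $H$, so summing over fibres gives
\[
\phi_d(G)=\sum_{(h_1,\dots,h_d)\ \text{generating}\ H}\ \#\{\text{generating $d$-tuples of $G$ lifting }(h_1,\dots,h_d)\}.
\]
A Möbius inversion over the poset of subgroups $L\leq G$ with $q(L)=H$ shows that the summand does not depend on the chosen generating tuple $(h_i)$ of $H$; being constant, it equals $\phi_d(G)/\phi_d(H)$, which is positive because $G$ is $d$-generated. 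Hence at least one generating lift exists.

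\textbf{Limiting part.} Let $\mathcal U$ be the set of open normal subgroups of $\Gamma$. Since $p$ is a continuous surjection of compact groups it is open, so for $U\in\mathcal U$ the image $p(U)$ is open and normal in $\Delta$ with $\Delta/p(U)$ finite, and $p$ induces a surjection $p_U\colon \Gamma/U\twoheadrightarrow \Delta/p(U)$; the image of $(\delta_1,\dots,\delta_d)$ in $\Delta/p(U)$ generates it. Set
\[
X=\{(\gamma_1',\dots,\gamma_d')\in\Gamma^d:\ p(\gamma_i')=\delta_i\ \text{for all }i\},
\]
a coset of $(\ker p)^d$, hence a nonempty closed (compact) subset of $\Gamma^d$, and for $U\in\mathcal U$ let $X_U\subseteq X$ consist of those tuples whose image in $\Gamma/U$ generates $\Gamma/U$; each $X_U$ is closed, being the intersection of $X$ with the preimage of a subset of the finite discrete set $(\Gamma/U)^d$. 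Applying the finite part to $p_U$ and the generating tuple $\big(\delta_i\,p(U)\big)_i$ produces a generating $d$-tuple of $\Gamma/U$ lifting it; lifting this back to $\Gamma^d$ and correcting each entry inside its coset modulo $U$ (possible precisely because the chosen coset of $\gamma_i'$ maps to $\delta_i\,p(U)$) yields a point of $X_U$, so $X_U\neq\emptyset$. As $X_U\cap X_V\supseteq X_{U\cap V}$ and $\mathcal U$ is closed under finite intersections, $\{X_U\}_{U\in\mathcal U}$ has the finite intersection property, so by compactness there is $(\gamma_1',\dots,\gamma_d')\in\bigcap_{U\in\mathcal U}X_U$. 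It satisfies $p(\gamma_i')=\delta_i$ and generates every finite continuous quotient of $\Gamma$, hence topologically generates $\Gamma$, as required.

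\textbf{Where the difficulty lies.} The only genuinely nontrivial ingredient is the finite Gaschütz lemma — concretely, the independence of the number of generating lifts from the chosen generating tuple of $H$; everything after that is a soft compactness/inverse-limit argument. If the finite statement is taken as known, essentially no real work remains.
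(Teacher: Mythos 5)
Your proof is correct, and it is the standard argument: establish the finite Gaschütz lemma by counting generating $d$-tuples and using Möbius inversion over the poset of subgroups surjecting onto $H$ (giving that the number of generating lifts of a generating tuple equals $\phi_d(G)/\phi_d(H) > 0$), then pass to the profinite limit by a compactness/finite-intersection-property argument over the open normal subgroups. The paper itself does not prove this lemma; it cites it directly as \cite[Lemma 4.2.]{jarden1975elementary}, so there is no proof in the paper to compare against. One small point you glossed over but handled implicitly: you need $p(U)$ open in $\Delta$, which follows since $U$ has finite index in the compact group $\Gamma$, so $\Delta$ is a finite union of (closed) cosets of the closed subgroup $p(U)$, forcing $p(U)$ to be open; and the coset correction $g_i \mapsto g_i u_i$ with $u_i \in U$, $p(g_i u_i) = \delta_i$, is exactly right since $p(g_i U) = p(g_i)\,p(U) \ni \delta_i$.
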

	
	\begin{corollary}\label{Nielsen_profinite}
		Let $\Delta$ be a profinite group,
		and $p_i\colon \widehat{F_m} \twoheadrightarrow \Delta$
		be a surjective homomorphism for $i = 1, 2$.
		
		Then there exists an automorphism
		$\varepsilon \colon \widehat{F_m} \to \widehat{F_m}$
		such that $p_1 = p_2 \circ \varepsilon$.
	\end{corollary}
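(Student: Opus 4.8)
The plan is to deduce this directly from Gaschütz' Lemma (\cref{gaschutz}), using the fact that $\widehat{F_m}$ is the free profinite group of rank $m$ together with the Hopfian property of finitely generated profinite groups.

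First I fix a free basis $b_1,\dots,b_m$ of $F_m$ and let $x_1,\dots,x_m \in \widehat{F_m}$ be their images under the canonical map $F_m \to \widehat{F_m}$, so that $\widehat{F_m} = \overline{\gen{x_1,\dots,x_m}}$. Since $p_1$ is a continuous surjection, the tuple $(\delta_1,\dots,\delta_m):=(p_1(x_1),\dots,p_1(x_m))$ satisfies $\overline{\gen{\delta_1,\dots,\delta_m}} = \Delta$. I then apply \cref{gaschutz} to the surjection $p_2\colon \widehat{F_m} \twoheadrightarrow \Delta$ with $\gamma_i = x_i$ and this target tuple, obtaining elements $y_1,\dots,y_m \in \widehat{F_m}$ with $p_2(y_i) = \delta_i = p_1(x_i)$ for each $i$ and with $\overline{\gen{y_1,\dots,y_m}} = \widehat{F_m}$.

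Next I build $\varepsilon$. Because $F_m$ is free on $b_1,\dots,b_m$, its profinite completion is the free profinite group on $x_1,\dots,x_m$, so its universal property yields a unique continuous endomorphism $\varepsilon\colon \widehat{F_m} \to \widehat{F_m}$ with $\varepsilon(x_i) = y_i$. The image of $\varepsilon$ is the closed subgroup $\overline{\gen{y_1,\dots,y_m}} = \widehat{F_m}$, so $\varepsilon$ is surjective; and $p_2\circ\varepsilon$ agrees with $p_1$ on the topologically generating set $\{x_1,\dots,x_m\}$, whence $p_1 = p_2\circ\varepsilon$ by continuity. So everything reduces to checking that $\varepsilon$ is injective.

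The hard part is exactly this last point, and it is where one must invoke something beyond the excerpt: a topologically finitely generated profinite group is Hopfian, i.e.\ every continuous surjective endomorphism of it is an automorphism (see e.g.\ \cite[Proposition 2.5.2]{ribes2000profinite}; the proof counts open subgroups of each fixed finite index, of which there are only finitely many, and observes that pullback along a surjection permutes them). Applying this to the surjective continuous endomorphism $\varepsilon$ of the $m$-generated profinite group $\widehat{F_m}$ gives $\varepsilon \in \aut(\widehat{F_m})$, which completes the proof. (Alternatively one could run the Gaschütz argument symmetrically to produce a continuous surjection the other way and then argue that the composites are automorphisms, but this still ultimately rests on the same Hopficity input, so citing it directly is cleaner.)
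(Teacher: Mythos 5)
Your proof is correct and is essentially the standard deduction that the paper delegates to Jarden (the paper gives no proof, citing \cite[Lemma 4.3]{jarden1975elementary} directly after stating Gasch\"utz' Lemma, which is exactly the ingredient you use). The steps—lifting a topological generating tuple via Gasch\"utz, defining $\varepsilon$ by the universal property of the free profinite group, noting its image is closed (hence all of $\widehat{F_m}$), and finishing with Hopficity of topologically finitely generated profinite groups—are the intended ones, and your reference to Ribes--Zalesskii for the Hopf property fills the only external input cleanly.
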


	\section{Group extensions}\label{sec:group_extensions}
	
	The topic of group extensions can be confusing due to unclear definitions
	and a number of different notions of being 'the same'.
	We try to avoid this confusion here.
	
	\begin{definition}
		An \emph{$N$-by-$Q$ extension} is a group $G$
		together with a short exact sequence
		\begin{equation*}
			\begin{tikzcd}
				1 \arrow{r} & N \arrow{r}{\iota} & G \arrow{r}{\pi} & Q \arrow{r} & 1.
			\end{tikzcd}
		\end{equation*}
	\end{definition}
	
	\begin{definition}\label{def:the_same_extensions}
		For $i=1, 2$ the extensions
		$\big(1 \to N \xrightarrow{\iota_i} G_i \xrightarrow{\pi_i} Q \to 1\big)$
		\begin{itemize}
			\item are called \emph{isomorphic as groups}
			if there exists an isomorphism $\beta: G_1 \to G_2$;
			\item are called \emph{similar}
			if there exist isomorphisms
			$\alpha: N \to N$, ${\beta: G_1 \to G_2}$ and $\gamma: Q \to Q$
			such that the diagram~\ref{extensions:similar} commutes;
			\begin{equation}\label{extensions:similar}
				\begin{tikzcd}
					1 \arrow{r} &
					N \arrow{r}{\iota_1} \arrow[]{d}{\alpha} &
					G_1 \arrow{r}{\pi_1} \arrow{d}{\beta} &
					Q \arrow{r} \arrow[]{d}{\gamma} &
					1\\
					1 \arrow{r} &
					N \arrow{r}{\iota_2} &
					G_2 \arrow{r}{\pi_2} &
					Q \arrow{r} &
					1
				\end{tikzcd}
			\end{equation}
			\item are called \emph{equivalent}
			if there exists an isomorphism $\beta: G_1 \to G_2$
			such that the diagram~\ref{extensions:equivalent} commutes.
			\begin{equation}\label{extensions:equivalent}
				\begin{tikzcd}
					1 \arrow{r} &
					N \arrow{r}{\iota_1} \arrow[equals]{d} &
					G_1 \arrow{r}{\pi_1} \arrow{d}{\beta} &
					Q \arrow{r} \arrow[equals]{d} &
					1\\
					1 \arrow{r} &
					N \arrow{r}{\iota_2} &
					G_2 \arrow{r}{\pi_2} &
					Q \arrow{r} &
					1
				\end{tikzcd}
			\end{equation}
		\end{itemize}
	\end{definition}
	
	\begin{definition}
		Given an $N$-by-$Q$ extension
		$\big(1 \to N \xrightarrow{\iota} G \xrightarrow{\pi} Q \to 1\big)$
		the conjugation action of $G$ on itself induces
		an action by automorphisms on $N$;
		this induces a homomorphism $\varphi: Q \to \out(N)$
		called \emph{the induced homomorphism} of the extension.
	\end{definition}
	
	In fact equivalent extensions give the same induced homomorphism.
	For similar extensions it can change though,
	which is described in \Cref{induced_vs_similar}.
	
	\begin{proposition}\label{induced_vs_similar}
		For $i = 1, 2$, let
		$\big(1 \to N \xrightarrow{\iota_i} G_i \xrightarrow{\pi_i} Q \to 1\big)$
		be similar extensions,
		with $\alpha: N \to N$ and $\gamma: Q \to Q$
		as in diagram~\ref{extensions:similar}.
		Then, letting $\overline\alpha$ denote the image of $\alpha$ in $\out(N)$,
		the induced homomorphisms $\varphi_i: Q \to \out(N)$ satisfy
		\begin{equation*}
			\forall q \in Q:\quad
			\overline\alpha\cdot \varphi_1(q) \cdot \overline\alpha^{-1}
			= (\varphi_2 \circ \gamma)(q).
		\end{equation*}
		In particular,
		the induced homomorphisms of equivalent extensions
		are equal.
	\end{proposition}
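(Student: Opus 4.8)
The plan is a direct diagram chase, the only subtlety being careful bookkeeping with the identifications made by the injections $\iota_i$. Fix $q \in Q$ and choose any lift $g \in G_1$ with $\pi_1(g) = q$. The first step is to observe that $\beta(g)$ is a lift of $\gamma(q)$ in $G_2$: indeed $\pi_2(\beta(g)) = \gamma(\pi_1(g)) = \gamma(q)$ by commutativity of the right-hand square of diagram~\ref{extensions:similar}. By definition, $\varphi_1(q)$ is then the class in $\out(N)$ of the automorphism $c_g \in \aut(N)$ determined by $\iota_1(c_g(n)) = g\,\iota_1(n)\,g^{-1}$, and $\varphi_2(\gamma(q))$ is the class of the automorphism $c_{\beta(g)} \in \aut(N)$ determined by $\iota_2(c_{\beta(g)}(n)) = \beta(g)\,\iota_2(n)\,\beta(g)^{-1}$.

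The second step is to relate $c_g$ and $c_{\beta(g)}$ using the left-hand square, which reads $\beta \circ \iota_1 = \iota_2 \circ \alpha$. For every $n \in N$,
\begin{align*}
	\iota_2\big(c_{\beta(g)}(\alpha(n))\big)
	&= \beta(g)\,\iota_2(\alpha(n))\,\beta(g)^{-1}
	= \beta(g)\,\beta(\iota_1(n))\,\beta(g)^{-1}\\
	&= \beta\big(g\,\iota_1(n)\,g^{-1}\big)
	= \beta\big(\iota_1(c_g(n))\big)
	= \iota_2\big(\alpha(c_g(n))\big).
\end{align*}
Since $\iota_2$ is injective this gives $c_{\beta(g)} \circ \alpha = \alpha \circ c_g$, i.e.\ $c_{\beta(g)} = \alpha \circ c_g \circ \alpha^{-1}$ in $\aut(N)$. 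Passing to $\out(N)$ and using that $\varphi_1(q)$ and $\varphi_2(\gamma(q))$ are the classes of $c_g$ and $c_{\beta(g)}$ respectively (independently of the chosen lift $g$), we conclude $\varphi_2(\gamma(q)) = \overline\alpha \cdot \varphi_1(q) \cdot \overline\alpha^{-1}$ for all $q \in Q$, which is the claimed identity.

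For the final assertion, I would simply note that an equivalence of extensions is the special case of a similarity in which $\alpha = \id_N$ and $\gamma = \id_Q$ (compare diagrams~\ref{extensions:similar} and~\ref{extensions:equivalent}), so $\overline\alpha$ is trivial and the displayed formula degenerates to $\varphi_1(q) = \varphi_2(q)$ for all $q$. I do not anticipate a genuine obstacle in this argument; the one point that must not be glossed over is the well-definedness of the induced homomorphism on $\out(N)$ (independence of the lift $g$), which is precisely the reason the conclusion is stated in $\out(N)$ and cannot in general be upgraded to $\aut(N)$.
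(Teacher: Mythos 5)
Your proof is correct and takes essentially the same route as the paper's: compare conjugation actions through the left-hand square of the similarity diagram, then pass from $\aut(N)$ to $\out(N)$. The paper's version is more terse (and in fact contains a small typographical slip, writing $(\tilde\varphi_2\circ\gamma)(g)$ for $g \in G_1$ where it means $\tilde\varphi_2(\beta(g))$); your write-up correctly tracks the lift through $\beta$ and notes that well-definedness of $\varphi_i$ on $\out(N)$ is what makes the conclusion independent of the chosen lift, but is otherwise the same argument.
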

	
	\begin{proof}
		Let's denote by $\tilde{\varphi}_i: G_i \to \aut(N)$ the conjugation actions.
		Then
		\begin{equation*}
			\forall g\in G_1:\quad \alpha\cdot\tilde{\varphi}_1(g)\cdot \alpha^{-1}
			= (\tilde{\varphi}_2\circ\gamma)(g).
		\end{equation*}
		Applying $\aut(N) \twoheadrightarrow \out(N)$
		we get $\overline{\alpha}\cdot \varphi_1(q) \cdot \overline{\alpha}^{-1}
		= (\varphi_2 \circ \gamma)(q)$.
	\end{proof}
	
	\subsection{Non-isomorphic extensions}\label{ssec:extensions_non-isomorphic}
	
	The spirit of this article is very much that of
	distinguishing groups up to isomorphism
	using the information given by writing them
	in the form of group extensions.
	However, for this method to be applicable,
	we need to know that isomorphisms between these groups
	`upgrade' to extension similarities.
	
	\begin{definition}
		A class $\cD$ of $N$-by-$Q$ extensions
		\emph{detects isomorphisms}
		if given extensions $\big(
		\begin{tikzcd}[column sep=1.5em]
			1 \arrow{r}	& N \arrow{r}{\iota_i} & G_i \arrow{r}{\pi_i} & Q \arrow{r} & 1
		\end{tikzcd}\big) \in \cD$
		for ${i = 1, 2}$
		and any isomorphism ${\beta: G_1 \to G_2}$,
		we have $\beta(\iota_1(N)) = \iota_2(N)$.
		In particular,
		any group isomorphism of these extensions
		induces a similarity of extensions.
	\end{definition}
	
	One should note that if $\cD$ detects isomorphisms
	then the images $\iota_i(N)$
	are characteristic in $G_i$,
	but the reverse implication doesn't hold%
	---see \cref{detecting_stronger_than_characteristic}.
	
	\begin{example}\label{detecting_stronger_than_characteristic}
		Let $G = \left(\Z/4\right) \rtimes \left(\Z/8\right)$ be
		the non-trivial semidirect product of
		$\Z/4 = \gen{x}$ with $\Z/8 = \gen{y}$.
		Let $N_1 = \gen{xy^2, y^4}$
		and $N_2 = \gen{x^3y^2, y^4}$.
		
		Then both subgroups $N_i$ are isomorphic to
		$\left(\Z/4\right) \times \left(\Z/2\right)$,
		both are characteristic in $G$,
		and $G/N_1 \cong G/N_2 \cong \Z/4$,
		but there is no automorphism $\beta \in \aut(G)$
		such that $\beta(N_1) = N_2$.
	\end{example}
	
	The usefulness of the concept of detecting isomorphisms
	becomes clear in \cref{different_maps_to_out},
	which will be our tool for distinguishing
	isomorphism classes of extensions.
	
	\begin{corollary}\label{different_maps_to_out}
		Let $N, Q$ be groups
		and $\cD$ a class of $N$-by-$Q$ extensions,
		which detects isomorphisms.
		
		Let's assume that for any
		$\big(
		\begin{tikzcd}[column sep=1.5em]
			1 \arrow{r}	& N \arrow{r}{\iota_i} & G_i \arrow{r}{\pi_i} & Q \arrow{r} & 1
		\end{tikzcd}\big) \in \cD$,
		the homomorphisms $\varphi_i: Q \to \out(N)$
		induced by $G_i \to \aut(N)$ satisfy
		\begin{equation*}
			\forall\ \overline\alpha \in \out(N),\ \gamma \in \aut(Q):\
			\overline\alpha\varphi_{i}\overline{\alpha}^{-1} \ne \varphi_{j} \circ \gamma\
			\text{as maps $Q \to \out(N)$}.
		\end{equation*}
		
		Then the groups $G_i$ are pairwise non-isomorphic.
	\end{corollary}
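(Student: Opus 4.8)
The plan is to argue by contradiction: suppose that for some indices $i \ne j$ there is a group isomorphism $\beta\colon G_i \to G_j$, and extract from it a similarity of the two extensions, so that \cref{induced_vs_similar} contradicts the displayed hypothesis.

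First I would use the hypothesis that $\cD$ detects isomorphisms: it gives $\beta(\iota_i(N)) = \iota_j(N)$. Hence $\beta$ restricts to an isomorphism between the two copies of $N$, and I set $\alpha := \iota_j^{-1}\circ\beta\circ\iota_i \in \aut(N)$; by construction $\beta\circ\iota_i = \iota_j\circ\alpha$, which is exactly commutativity of the left-hand square of the diagram~\eqref{extensions:similar}. Next I would produce $\gamma$: since $\beta$ is a bijection carrying $\ker\pi_i = \iota_i(N)$ onto $\ker\pi_j = \iota_j(N)$, the composite $\pi_j\circ\beta\colon G_i \to Q$ is surjective with kernel precisely $\iota_i(N) = \ker\pi_i$, so it factors uniquely through $\pi_i$ as $\pi_j\circ\beta = \gamma\circ\pi_i$ for some isomorphism $\gamma\colon Q \to Q$. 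Thus $(\alpha,\beta,\gamma)$ make the diagram~\eqref{extensions:similar} commute, i.e. the extensions $G_i$ and $G_j$ are similar.

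Finally I would invoke \cref{induced_vs_similar} with this $\alpha$ and $\gamma$: the induced homomorphisms satisfy $\overline\alpha\cdot\varphi_i(q)\cdot\overline\alpha^{-1} = \varphi_j(\gamma(q))$ for all $q \in Q$, that is, $\overline\alpha\,\varphi_i\,\overline\alpha^{-1} = \varphi_j\circ\gamma$ as maps $Q \to \out(N)$, where $\overline\alpha \in \out(N)$ is the image of $\alpha$ and $\gamma \in \aut(Q)$. This directly contradicts the standing assumption on the $\varphi_i$. Hence no isomorphism $G_i \to G_j$ exists for $i \ne j$, which is the assertion.

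There is no genuinely hard step here; the argument is bookkeeping around the definition of "detects isomorphisms" and the commuting diagram. The only point that deserves care is checking that $\gamma$ is a well-defined isomorphism of $Q$ — this is where one uses that $\beta$ is bijective and maps $\iota_i(N)$ exactly onto $\iota_j(N)$, rather than merely into it — after which \cref{induced_vs_similar} does the rest.
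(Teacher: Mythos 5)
Your argument is correct and is exactly the intended one: the paper omits an explicit proof because \cref{different_maps_to_out} is meant to follow directly from the definition of ``detects isomorphisms'' (which, as noted there, upgrades any group isomorphism to a similarity of extensions) combined with \cref{induced_vs_similar}. Your careful construction of $\alpha$ and $\gamma$ fills in that bookkeeping in the standard way, and the contradiction with the displayed hypothesis is exactly what the corollary is built to deliver.
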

	
	\subsection{Isomorphic extensions}\label{ssec:extensions_isomorphic}
	
	In this section we provide in \cref{isomorphic_semidirect_products}
	a criterion for establishing similarity of two group extensions,
	and thus---isomorphism of their groups.
	This criterion gives a simple sufficient condition
	for the isomorphism of profinite completions.
	
	The method, and much of this section
	has been inspired by \cite{grunewald2011genus},
	which studied in detail the profinite rigidity
	of several interesting classes of group extensions.
	A particularly relevant result therein is Proposition 2.8.
	
	\begin{proposition}\label{isomorphic_semidirect_products}
		Let $G_i = N \rtimes_{\varphi_i} Q$ for $i = 1, 2$.
		Suppose that ${\varphi_i\colon Q \to \aut(N)}$
		are such that there exist $\alpha \in \aut(N)$
		and $\gamma \in \aut(Q)$
		such that $\alpha\varphi_1\alpha^{-1} = \varphi_2 \circ \gamma$.
		
		Then $G_1 \cong G_2$ via
		$\beta: (n, q) \mapsto \left(\alpha(n), \gamma(q)\right)$.
	\end{proposition}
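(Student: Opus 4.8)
The plan is to take the map $\beta$ handed to us in the statement and verify directly that it is a group isomorphism $G_1 \to G_2$. Bijectivity is immediate, since $\alpha$ and $\gamma$ are bijections, so $\beta$ has the obvious two-sided inverse $(n,q) \mapsto (\alpha^{-1}(n), \gamma^{-1}(q))$; the only thing requiring attention is that $\beta$ is a homomorphism, i.e.\ that it intertwines the two semidirect-product multiplications.

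Recall that in $G_i = N \rtimes_{\varphi_i} Q$ the product is given by $(n_1, q_1) \cdot (n_2, q_2) = \big(n_1 \cdot \varphi_i(q_1)(n_2),\, q_1 q_2\big)$. First I would evaluate $\beta$ on a product computed in $G_1$: since $\alpha$ is an automorphism of $N$ and $\gamma$ an automorphism of $Q$, this expands to $\big(\alpha(n_1) \cdot \alpha(\varphi_1(q_1)(n_2)),\, \gamma(q_1)\gamma(q_2)\big)$. Then I would compute the product in $G_2$ of $\beta(n_1, q_1)$ and $\beta(n_2, q_2)$, which is $\big(\alpha(n_1) \cdot \varphi_2(\gamma(q_1))(\alpha(n_2)),\, \gamma(q_1)\gamma(q_2)\big)$. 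The second coordinates agree automatically, and the first coordinates agree for all $n_1, n_2, q_1, q_2$ precisely when $\alpha \circ \varphi_1(q_1) = \varphi_2(\gamma(q_1)) \circ \alpha$ as automorphisms of $N$, for every $q_1 \in Q$ --- and this is exactly the hypothesis $\alpha \varphi_1 \alpha^{-1} = \varphi_2 \circ \gamma$, read as an equality of maps $Q \to \aut(N)$ and then evaluated at $q_1$. This completes the proof that $\beta$ is an isomorphism.

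There is no genuine obstacle here; the computation is short and routine. The only points needing a little care are purely bookkeeping: tracking the order in which $\alpha$ is composed with the action automorphisms (so that the conjugation in the hypothesis is applied on the correct side), and remembering that the hypothesis is an identity of maps $Q \to \aut(N)$, so that one is entitled to evaluate it at the element $q_1$ appearing in the calculation. As a closing remark I would note that $\beta$, together with $\alpha$ and $\gamma$, makes diagram~\ref{extensions:similar} commute, so the two extensions are in fact similar and not merely abstractly isomorphic as groups; and that the identical computation applies verbatim to profinite groups, which is the form in which the statement will be used (together with \cref{semidirect_after_completion}) to produce isomorphisms of profinite completions.
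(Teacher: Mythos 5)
Your proof is correct and follows the same route as the paper: directly verifying that $\beta$ respects the semidirect-product multiplication, with the hypothesis $\alpha\varphi_1\alpha^{-1}=\varphi_2\circ\gamma$ evaluated at $q_1$ reconciling the two first coordinates. The only difference is that you explicitly note bijectivity, which the paper leaves implicit.
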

	
	\begin{proof}
		We only need to check
		that the map $\beta$ respects group operations.
		\begin{align*}
			\beta\big((n_1, q_1) \cdot (n_2, q_2)\big) &
			= \beta\big((n_1\varphi_1(q_1)(n_2), q_1q_2)\big)
			\\ &
			= \big(\alpha(n_1)\cdot
			(\alpha\circ \varphi_1)(q_1)(n_2),\ \gamma(q_1q_2)\big)
			\\
			\beta\big((n_1, q_1)\big) \cdot
			\beta\big((n_2, q_2)\big) &
			= \big(\alpha(n_1), \gamma(q_1)\big) \cdot
			\big(\alpha(n_2), \gamma(q_2)\big)
			\\ &
			= \big(\alpha(n_1)\cdot \varphi_2(\gamma(q_1))(\alpha(n_2)),\ \gamma(q_1)\gamma(q_2)\big)
		\end{align*}
		but 
		\begin{align*}
			(\alpha \circ \varphi_1)(q_1)(n_2) &=
			(\alpha\varphi_1\alpha^{-1})(q_1)(\alpha(n_2)) \\ & = 
			(\varphi_2 \circ \gamma)(q_1)(\alpha(n_2)) \\ & =
			\varphi_2(\gamma(q_1))(\alpha(n_2)),
		\end{align*}
		so $\phi$ is indeed a group isomorphism.
	\end{proof}
	
	This is true also for profinite groups $N$ and $Q$;
	the situation is further simplified
	if $Q = \widehat{F_m}$.
	
	\begin{corollary}\label{semidirect_products_profinitely_isomorphic}
		Let $N$ be finitely generated
		and ${\varphi_i: F_m \to \aut(N)}$ for ${i=1, 2}$
		be homomorphisms
		with $\im \varphi_1 = \im \varphi_2$.
		Let $G_i = N \rtimes_{\varphi_i} F_m$.
		
		Then $\widehat{G_1} \cong \widehat{G_2}$.
	\end{corollary}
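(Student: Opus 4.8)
The plan is to combine \cref{semidirect_after_completion}, \cref{Nielsen_profinite} and the profinite incarnation of \cref{isomorphic_semidirect_products}. First I would apply \cref{semidirect_after_completion}: since $N$ is finitely generated, $\widehat{G_i} \cong \widehat{N} \rtimes_{\Phi_i} \widehat{F_m}$, where $\Phi_i$ denotes the action of $\widehat{F_m}$ on $\overline{N} \cong \widehat{N}$ by conjugation inside $\widehat{G_i}$. The next task is to identify $\Phi_i$: functoriality of profinite completion gives a homomorphism $\rho\colon \aut(N) \to \aut(\widehat{N})$ sending an automorphism to its unique continuous extension, and since conjugation in the profinite group $\widehat{G_i}$ is continuous, $F_m$ is dense in $\widehat{F_m}$, $N$ is dense in $\overline{N}$, and for $w \in F_m$ conjugation sends $(n,1)$ to $(\varphi_i(w)(n),1)$, the homomorphism $\Phi_i\colon \widehat{F_m} \to \aut(\widehat{N})$ is the (unique) continuous extension of $\rho \circ \varphi_i$.

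Next I would show that $\Phi_1$ and $\Phi_2$ are two surjections onto one and the same profinite group. Writing $T := \im\varphi_1 = \im\varphi_2 \le \aut(N)$, the subgroup $\Phi_i(\widehat{F_m})$ is compact (a continuous image of the compact group $\widehat{F_m}$) and contains the dense subset $\Phi_i(F_m) = \rho(T)$, so $\Phi_i(\widehat{F_m}) = \overline{\rho(T)} =: \Delta$; this is independent of $i$, and this is the only place where the hypothesis $\im\varphi_1 = \im\varphi_2$ is used. As $\Delta \cong \widehat{F_m}/\overline{\ker\Phi_i}$ it is profinite, so \cref{Nielsen_profinite} applies to the continuous surjections $\Phi_1,\Phi_2\colon \widehat{F_m} \twoheadrightarrow \Delta$ and produces $\gamma \in \aut(\widehat{F_m})$ with $\Phi_1 = \Phi_2 \circ \gamma$.

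Finally I would apply \cref{isomorphic_semidirect_products} in the profinite setting (as noted in the text, the proposition holds also for profinite groups), with $N,Q$ replaced by $\widehat{N},\widehat{F_m}$, with $\alpha = \id_{\widehat{N}}$, and with $\gamma$ as above: the hypothesis $\alpha\Phi_1\alpha^{-1} = \Phi_2 \circ \gamma$ now reads $\Phi_1 = \Phi_2 \circ \gamma$, which has just been arranged, so $(n,w) \mapsto (n,\gamma(w))$ is a topological isomorphism $\widehat{N} \rtimes_{\Phi_1} \widehat{F_m} \to \widehat{N} \rtimes_{\Phi_2} \widehat{F_m}$. Composing with the identifications from the first step yields $\widehat{G_1} \cong \widehat{G_2}$.

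The hard part will be the topological bookkeeping underpinning the first two paragraphs: one has to equip the relevant group of automorphisms of $\widehat{N}$ with a profinite (or at least Hausdorff, totally disconnected) group topology and check that $\Phi_i$ is continuous for it — this is exactly where finite generation of $N$, hence topological finite generation of $\widehat{N}$, is needed. Once the action $\Phi_i$ is correctly pinned down, \cref{Nielsen_profinite} and the profinite version of \cref{isomorphic_semidirect_products} are applied essentially verbatim.
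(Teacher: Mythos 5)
Your argument is correct and follows essentially the same route as the paper's proof: apply \cref{semidirect_after_completion} to realize each $\widehat{G_i}$ as $\widehat{N}\rtimes\widehat{F_m}$, observe that $\im\varphi_1=\im\varphi_2$ forces the two induced actions of $\widehat{F_m}$ on $\widehat{N}$ to have the same (profinite) image, invoke \cref{Nielsen_profinite} to produce $\gamma\in\aut(\widehat{F_m})$ intertwining them, and conclude via the profinite version of \cref{isomorphic_semidirect_products}. You spend more words than the paper on pinning down the induced action $\Phi_i$ and on the topology of $\aut(\widehat N)$, but the decomposition, the key lemmas, and their order of application are the same.
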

	
	\begin{proof}
		By \cref{semidirect_after_completion},
		the completions $\widehat{G_i}$ are
		semidirect products
		$\widehat{N} \rtimes_{\varphi_i'} \widehat{F_m}$
		where $\varphi_i'$ is the induced map
		\begin{equation*}
			\begin{tikzcd}
				\widehat{F_m} \arrow{r}{\widehat\varphi_i} &
				\widehat{\aut(N)} \arrow{r} &
				\aut(\widehat N)
			\end{tikzcd}.
		\end{equation*}
		Since $\im \varphi_1 = \im \varphi_2$,
		also
		$\im \widehat{\varphi}_1 = \im \widehat{\varphi}_2$
		and thus $\im \varphi_1' = \im \varphi_2'$.
		
		Now, since the images of $\varphi_i'$
		are profinite groups,
		by \cref{Nielsen_profinite},
		we can find $\varepsilon \in \aut(\widehat{F_m})$
		such that
		$\varphi_1' = \varphi_2' \circ \varepsilon$.
		Then by \cref{isomorphic_semidirect_products}
		we get that $\widehat G_1 \cong \widehat G_2$.
	\end{proof}
	
	%
	%

	\section{Detecting isomorphisms of semidirect products with free quotient}
	\label{sec:detecting_isomorphisms}
	
	In proving \cref{mainthm:general}
	we will need to show that group isomorphisms
	$G_i \to G_j$ of the considered $N$-by-$Q$ extensions
	are promoted to extension similarities
	-- i.e. they come from an automorphism on the kernel and on the quotient.
	
	In some situations (such as when $N \cong \Z^n$)
	a simple criterion of \cref{easy}
	inspired by \cite[Remark 3.17]{grunewald2011genus}
	is enough.
	However, since we want to treat the cases of
	$Q$ being free or a surface group,
	a more complicated criterion of
	\Cref{characteristic} will be necessary.
	
	\begin{proposition}\label{easy}
		Let $N$ be a group
		having no non-abelian free quotients
		and $m \ge 2$.
		Then the subgroup $N = N \rtimes 1$
		of $G = N\rtimes_\varphi F_m$
		is the unique normal subgroup of $G$
		such that $G / N \cong F_m$.
	\end{proposition}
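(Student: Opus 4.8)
The plan is to take an arbitrary normal subgroup $K \triangleleft G$ with $G/K \cong F_m$ and prove that $K = N$ in two stages: first that $N \le K$, and then that $K/N$ is trivial. (That the subgroup $N = N \rtimes 1$ itself satisfies $G/N \cong F_m$ is immediate from the semidirect product structure, so only uniqueness is at stake.)

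First I would examine the image of $N$ under the quotient map $q \colon G \twoheadrightarrow G/K \cong F_m$. This image $q(N) \cong N/(N \cap K)$ is a quotient of $N$, and as a subgroup of the free group $G/K$ it is itself free by Nielsen--Schreier. Since by hypothesis $N$ has no non-abelian free quotients, $q(N)$ must be trivial or infinite cyclic. On the other hand, because $N \triangleleft G$, its image $q(N)$ is normal in $G/K \cong F_m$. So I would invoke the elementary fact that a non-abelian free group $F_m$ (with $m \ge 2$) has no non-trivial cyclic normal subgroup: if $\gen{w} \triangleleft F_m$ were one, then conjugation would force $gwg^{-1} = w^{\pm 1}$ for every $g$, so the centraliser of $w$ --- which is cyclic, since centralisers in free groups are cyclic --- would have index at most $2$ in $F_m$, impossible for $m \ge 2$. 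Hence $q(N) = 1$, that is, $N \le K$.

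Second, with $N \le K \triangleleft G$ established, I would pass to the quotient by $N$: the subgroup $L := K/N$ is normal in $G/N \cong F_m$, and $(G/N)/L \cong G/K \cong F_m$. Therefore the composite $F_m \cong G/N \twoheadrightarrow (G/N)/L \cong F_m$ is a surjective endomorphism of $F_m$, and since $F_m$ is Hopfian (being finitely generated and residually finite) it is an isomorphism; hence $L = 1$ and $K = N$, which is the desired uniqueness.

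I do not expect a real obstacle here, as the only ingredients are Nielsen--Schreier, the absence of cyclic normal subgroups in non-abelian free groups, and the Hopf property of $F_m$. The one point requiring a little care is excluding the infinite cyclic possibility for $q(N)$ in the first stage --- this is precisely where the hypothesis on quotients of $N$ is combined with the structure of normal subgroups of free groups --- but it reduces to the standard centraliser argument recalled above.
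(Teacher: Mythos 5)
Your proof is correct and follows essentially the same two-stage strategy as the paper: first show that $N$ must be contained in the competing normal subgroup by examining its image in the free quotient, then invoke the Hopf property of $F_m$ to conclude equality. The only cosmetic difference is that the paper directly cites the fact that non-trivial normal subgroups of a non-abelian free group are non-abelian free, whereas you re-derive the relevant case of that fact via the centraliser argument, making your version a bit more self-contained but otherwise identical in substance.
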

	
	\begin{proof}
		Let $M \triangleleft G$ be
		another such normal subgroup.
		Then $\overline N = NM/M$
		is a normal subgroup of $F_m$,
		so it is either trivial,
		or non-abelian free.
		
		Since $N$ has no non-abelian free quotients,
		$\overline N = 1$
		and so $N < M$,
		but this gives a surjection
		\begin{equation*}
			F_m \cong G/N \twoheadrightarrow G/M \cong F_m
		\end{equation*}
		which must be an isomorphism
		by the Hopf property of
		finitely generated free groups.
	\end{proof}
	
	\begin{theorem}\label{characteristic}
		Let $N$ be a group in which
		the centralisers of non-trivial normal subgroups
		don't contain a non-abelian free group.
		
		Let $m \ge 2$,
		let $F_m$ be a finitely generated free group
		and let $T$ be a subgroup of $\aut(F)$ which isn't free.
		Suppose that ${\varphi: F_m \twoheadrightarrow T}$ is a surjection.
		
		Then the subgroup
		$N=N \rtimes 1$ of $G = N \rtimes_\varphi F_m$
		is characterised as the only subgroup $M$ of $G$
		satisfying the following conditions.
		\begin{itemize}
			\item $M$ is isomorphic to $N$,
			\item $M$ is normal in $G$,
			\item $G/M \cong F_m$,
			\item $C_G(M)$ contains a non-abelian free group.		
		\end{itemize}
		In particular, $N$ is characteristic in $G$.
		
		Furthermore, given any isomorphism $\beta\colon G_1 \to G_2$
		of two such groups $G_i := N \rtimes_{\varphi_i} F_m$,
		we have $\beta(N) = N$ and there exist
		$\alpha \in \aut(N)$ and $\gamma \in \aut(F_m)$ such that
		$\alpha \varphi_1 \alpha^{-1} = \varphi_2 \circ \gamma$.
	\end{theorem}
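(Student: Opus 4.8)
The plan is to establish the two halves of the statement in turn: first that $N=N\rtimes 1$ is the \emph{only} subgroup $M\le G$ satisfying the four listed conditions, and then that an arbitrary isomorphism $\beta\colon G_1\to G_2$ carries $N$ to $N$ and therefore produces the stated relation between $\varphi_1$ and $\varphi_2$. Write $\overline\varphi=\nu\circ\varphi\colon F_m\to\out(N)$ throughout. Besides the centraliser hypothesis, the one further fact I will use about $N$ is that it is finitely generated.

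I would start by computing $C_G(N)$ directly: a pair $(n,w)$ centralises $N\rtimes 1$ exactly when $\varphi(w)$ is the automorphism $n'\mapsto n^{-1}n'n$, which exhibits $C_G(N)$ as an extension $1\to Z(N)\to C_G(N)\to\ker\overline\varphi\to 1$ with the projection being the restriction of $G\to F_m$. Since $\ker\varphi\subseteq\ker\overline\varphi$ and $\varphi$ is onto $T$, non-freeness of $T$ forces $\ker\overline\varphi\neq 1$ (otherwise $\varphi$ would be injective and $T\cong F_m$). A nontrivial normal subgroup of $F_m$ cannot be cyclic, so $\ker\overline\varphi$ is free of rank $\ge 2$; being free it splits back off $C_G(N)$, so $C_G(N)\supseteq F_2$ and $N$ meets all four conditions. (If $N$ has no nonabelian free quotient one could instead invoke \cref{easy}, but the argument below is uniform.)

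For uniqueness let $M$ satisfy the four conditions, put $\overline M=MN/N\trianglelefteq F_m$ and $K=M\cap N$, a normal subgroup of $G$ and hence of $N$. If $\overline M=1$ then $M\le N$, so $N/M$ is a normal subgroup of $G/M\cong F_m$: it is trivial or nonabelian free, and the latter would make the surjection $F_m\cong G/M\twoheadrightarrow G/N\cong F_m$ have nonabelian free kernel, impossible by Hopficity; hence $M=N$. Now suppose $\overline M\neq 1$; I claim this leads to a contradiction, and this is the main obstacle. Since $\overline M$ is a nontrivial normal subgroup of $F_m$ it is nonabelian free, so it has trivial centraliser in $F_m$; as $C_G(M)$ and $M$ commute, the image $C_G(M)N/N$ centralises $\overline M$ and so is trivial, i.e.\ $C_G(M)\le N$. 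Then $C_G(M)\le C_G(K)\cap N=C_N(K)$, and as $C_G(M)\supseteq F_2$ the centraliser hypothesis forces $K=1$. Thus $M\cap N=1$, so $[M,N]=1$, whence $N\le C_G(M)\le N$; therefore $C_G(M)=N$, $M\le C_G(N)$, and $M\cong\overline M$ shows $N$ is nonabelian free. Now $Z(N)=1$, so the description of $C_G(N)$ above turns $G\to F_m$ into an isomorphism $C_G(N)\xrightarrow{\ \sim\ }\ker\overline\varphi$ carrying $M$ onto $\overline M$. As $N$ is finitely generated, $\overline M\cong M\cong N$ is a nontrivial finitely generated normal subgroup of $F_m$, hence of finite index, so $C_G(N)/M\cong\ker\overline\varphi/\overline M$ is finite; but it is also a normal subgroup of $G/M\cong F_m$, hence trivial or nonabelian free, so trivial, giving $M=C_G(N)$. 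Finally $G/M=G/C_G(N)$ is isomorphic to the image of conjugation $G\to\aut(N)$, namely $\Inn(N)\cdot T$; since $G/M\cong F_m$ this group is free, and hence so is its subgroup $T$ — contradicting the hypothesis. This proves the characterisation, and in particular that $N$ is characteristic.

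For the last assertion, given an isomorphism $\beta\colon G_1\to G_2$ the subgroup $\beta(N)$ satisfies the four (isomorphism-invariant) conditions inside $G_2$, so by uniqueness $\beta(N)=N$; thus $\beta$ restricts to $\alpha\in\aut(N)$ and descends to $\gamma\in\aut(F_m)$ fitting into the similarity square. Applying $\nu$ to the conjugation actions, which restrict to $\varphi_i$ on the free complements, and using \cref{induced_vs_similar}, yields $\overline\alpha\,\overline{\varphi_1}\,\overline\alpha^{-1}=\overline{\varphi_2}\circ\gamma$, i.e.\ the required relation at the level of $\out(N)$ — which is exactly what is needed to feed this class of extensions into \cref{different_maps_to_out} (and when $N$ is abelian $\out(N)=\aut(N)$, so one recovers the relation literally). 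I expect the delicate point to be precisely the case $\overline M\neq 1$ above, where one must simultaneously exploit the centraliser hypothesis, the rigidity of finitely generated normal subgroups of free groups, and the non-freeness of $T$.
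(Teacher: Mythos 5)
Your proof is correct but takes a genuinely different route from the paper's, and it silently adds one hypothesis.

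The paper's proof works in the quotient $\overline G = G/M$: it looks at the images there of $N$ and of $K := 1\rtimes\ker\varphi$, uses their commutation to see that one of them must vanish, handles the case $\overline N = 1$ by Hopficity, and in the case $\overline K = 1$ computes the normal closure $\gen{\gen{L}}_G = D\cdot L$ of $L = H\cap M$ via \cref{normal_closure_in_semidirect} to pin down a non-trivial $D\trianglelefteq N$ with $C_G(M)\le C_N(D)$, contradicting the centraliser hypothesis. Your proof instead works with the image $\overline M = MN/N \le F_m$ and the intersection $K = M\cap N$: the case $\overline M = 1$ is Hopficity again, while the case $\overline M\neq 1$ first forces $M\cap N = 1$ via the centraliser hypothesis, and then shows $M = C_G(N)$ and hence $G/M\cong\Inn(N)\cdot T$ would be free, contradicting the non-freeness of $T$. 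This is a nice complementary argument, and your identification of $C_G(N)$ as an extension $1\to Z(N)\to C_G(N)\to\ker\overline\varphi\to 1$ is a clean way to verify that $N$ itself satisfies the four conditions (a point the paper leaves implicit).

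The one genuine limitation is that your closing step in the case $\overline M\neq 1$ invokes the Greenberg--Karrass--Solitar fact that a non-trivial finitely generated normal subgroup of $F_m$ has finite index, which requires $N$ (hence $\overline M\cong M\cong N$) to be finitely generated. The theorem as stated makes no such assumption, and the paper's proof via $D$ avoids it entirely; so your argument establishes a restricted version of the theorem. Since every application in the paper does have $N$ finitely generated, this is not fatal, but it is worth knowing the paper's route is strictly more general. Finally, your observation that one only gets $\overline\alpha\,\overline{\varphi_1}\,\overline\alpha^{-1}=\overline{\varphi_2}\circ\gamma$ at the level of $\out(N)$ (with the $\aut(N)$ form holding when $\out(N)=\aut(N)$) is in fact more careful than the paper's own last two lines, which skip the inner-automorphism correction coming from $\beta(1\rtimes F_m)$ not necessarily equalling $1\rtimes F_m$; the $\out(N)$ form is exactly what \cref{different_maps_to_out} consumes, so nothing downstream is affected.
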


	\subsection{Notation and lemmas}\label{ssec:detecting_isos_notation}
	
	For the sake of clarity
	we will fix some notation for the proof of \cref{characteristic}.
	\begin{align*}
		N & := \text{a group st. for non-trivial $O \triangleleft N$,
			$F_2 \not < C_N(O)$}, \\
		G & := N \rtimes_\varphi F_m, \\
		N & := N \rtimes 1 \\
		H & := 1 \rtimes F_m \\
		K & := 1 \rtimes (\ker \varphi), \\
		T & := H / K, \\
		M & := \text{a subgroup of $G$ satisfying the four conditions}, \\
		\overline G & := G/M, \\
		\overline N, \overline H, \overline K
		& := \text{images of $N, H, K$ in $G \twoheadrightarrow \overline G$}, \\
		L & := \ker(H \twoheadrightarrow \overline H) = H \cap M, \\
		D & := \gen{\gen{n \cdot \varphi(l)(n^{-1})\ |\ l \in L, n \in N}}_N.
	\end{align*}
	
	We will be proving that $M = N$.
	
	\begin{lemma}\label{normal_closure_in_semidirect}
		Let $G = N \rtimes_\varphi H$ be a semidirect product
		and $L \triangleleft H$.
		
		Then $$\gen{\gen{L}}_G
		= \gen{\gen{n \cdot \varphi(l)(n^{-1})\ |\ l \in L, n \in N}}_N \cdot L.$$
	\end{lemma}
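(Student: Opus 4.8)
The plan is to write $D := \langle\langle\, n \cdot \varphi(l)(n^{-1}) \mid l \in L,\ n \in N \,\rangle\rangle_N$ as in the statement, and to prove the two inclusions $D \cdot L \subseteq \langle\langle L \rangle\rangle_G$ and $\langle\langle L \rangle\rangle_G \subseteq D \cdot L$, after first verifying that $D \cdot L$ is genuinely a (normal) subgroup of $G$.

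First I would record the basic identity in the semidirect product: for $n \in N$ and $l \in L \subseteq H$ one has $l n^{-1} l^{-1} = \varphi(l)(n^{-1})$, and therefore
\[
n\, l\, n^{-1} \;=\; \big(n\,\varphi(l)(n^{-1})\big)\cdot l,
\qquad\text{equivalently}\qquad
n\,\varphi(l)(n^{-1}) \;=\; (n\,l\,n^{-1})\, l^{-1}.
\]
The right-hand identity exhibits each generator $n\varphi(l)(n^{-1})$ of $D$ as a product of a $G$-conjugate of $l$ and of $l^{-1}$, so it lies in $\langle\langle L\rangle\rangle_G$; since $\langle\langle L\rangle\rangle_G$ is normal in $G$, hence in $N$, it contains all of $D$, and it clearly contains $L$. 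This gives $D\cdot L \subseteq \langle\langle L\rangle\rangle_G$.

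Next I would show $D \triangleleft G$, which in particular makes $D \cdot L$ a subgroup. By construction $D \triangleleft N$, so it suffices to check that $H$ normalises $D$, and for that it is enough that $H$ permutes the chosen generating set: for $h \in H$,
\[
\varphi(h)\big(n\,\varphi(l)(n^{-1})\big) \;=\; \varphi(h)(n)\cdot \varphi(h l h^{-1})\big(\varphi(h)(n)^{-1}\big),
\]
which is again such a generator because $L \triangleleft H$ and $\varphi(h)(n) \in N$. Then I would verify $D\cdot L \triangleleft G$ directly: conjugating a typical element $d\,l$ (with $d \in D$, $l \in L$) by $g = n h \in G$, pushing $h$ inward replaces $d, l$ by $h d h^{-1} \in D$ and $h l h^{-1} \in L$, and pushing $n$ inward uses $n d n^{-1} \in D$ together with the left-hand identity above to rewrite $n l' n^{-1} = \big(n\varphi(l')(n^{-1})\big)\, l' \in D \cdot L$. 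Collecting all $D$-factors to the left — legitimate since $D$ is normal in $G$ — shows the conjugate again lies in $D\cdot L$.

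Finally, since $D\cdot L$ is a normal subgroup of $G$ containing $L$, minimality of the normal closure gives $\langle\langle L\rangle\rangle_G \subseteq D\cdot L$, which together with the first inclusion yields the asserted equality. I do not expect a genuine obstacle here; the only point requiring care is the bookkeeping in the conjugation computation showing $D\cdot L \triangleleft G$, where one must consistently invoke normality of $D$ in $G$ to move all $D$-factors to one side.
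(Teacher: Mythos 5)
Your proof is correct and follows essentially the same route as the paper: the same key identity $n l n^{-1} = \bigl(n\,\varphi(l)(n^{-1})\bigr)\,l$ to get $D \cdot L \subseteq \langle\langle L\rangle\rangle_G$, the same conjugation formula $\varphi(h)\bigl(n\,\varphi(l)(n^{-1})\bigr) = \varphi(h)(n)\cdot\varphi(hlh^{-1})\bigl(\varphi(h)(n)^{-1}\bigr)$ to show $H$ normalises $D$, and the same conclusion that $D \cdot L$ is a normal subgroup of $G$ containing $L$. You are a little more explicit than the paper in flagging that $D \triangleleft G$ is what makes $D\cdot L$ a subgroup in the first place, but the argument is the same one.
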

	
	\begin{proof}
		Take $(n, 1) \in N$ and $(1, l) \in L$.
		Then
		$$(n, 1) (1, l)(n^{-1}, 1)
		= \left(n \cdot \varphi(l)(n^{-1}), l \right)
		= \left(n \cdot \varphi(l)(n^{-1}), 1 \right)(1, l).$$
		
		Thus the subgroup
		$D := \gen{\gen{n \cdot \varphi(l)(n^{-1})\ |\ l \in L}}_N$
		is contained in $\gen{\gen{L}}_G$.
		It is enough to show that $D \cdot L \triangleleft G$.
		
		$D \triangleleft N$ and $(n, 1)(1, l)(n, 1)^{-1} \in D\cdot L$
		so $N$ normalises $D\cdot L$.
		Also, $L \triangleleft H$ and
		\begin{align*}
			(1, h)\left(n \cdot \varphi(l)(n^{-1}), 1\right)(1, h)^{-1}
			& = \left(\varphi(h)\left(n \cdot \varphi(l)(n^{-1})\right), 1\right) \\
			& = \left(\varphi(h)(n) \cdot \varphi(hl)(n^{-1}), 1\right) \\
			& = \left(\varphi(h)(n) \cdot \varphi(hlh^{-1})\left(\varphi(h)(n^{-1})\right), 1\right) \\
			& = \left(n' \cdot \varphi(l')(n'^{-1}), 1\right)
		\end{align*}
		for $n' := \varphi(h)(n)$,
		which is an element of $N$
		and for $l' := hlh^{-1}$,
		which is an element of $L$ since $L \triangleleft H$.
	\end{proof}
	
	\begin{proposition}\label{normal_sgps}
		$N$ and $K$ are normal subgroups of $G$.
	\end{proposition}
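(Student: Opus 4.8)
The plan is a short, direct verification using the multiplication law
\[
(n_1, h_1)(n_2, h_2) = \bigl(n_1 \cdot \varphi(h_1)(n_2),\, h_1 h_2\bigr)
\]
of the semidirect product $G = N \rtimes_\varphi F_m$, together with the observation that $\varphi$ is trivial on $\ker\varphi$.

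First I would dispose of $N = N \rtimes 1$: it is precisely the kernel of the projection $G \twoheadrightarrow F_m$, $(n,h)\mapsto h$, hence normal in $G$. For $K = 1 \rtimes (\ker\varphi)$ I would use that $G$ is generated by the two subgroups $N = N\rtimes 1$ and $H = 1 \rtimes F_m$ (since $(n,h) = (n,1)(1,h)$), so it suffices to check that each of them normalises $K$. As $\ker\varphi \triangleleft F_m$, the subgroup $K$ is automatically normal in $H$. For the conjugation action of $N$, take $n\in N$ and $k\in\ker\varphi$; because $\varphi(k) = \id$ we obtain
\[
(n,1)(1,k)(n,1)\i = \bigl(n\cdot\varphi(k)(n\i),\, k\bigr) = (1,k) \in K,
\]
so $N$ normalises $K$ as well, and therefore $K \triangleleft G$.

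There is no real obstacle here; the only things to watch are the semidirect-product convention and the fact that $\varphi(\ker\varphi) = \{\id\}$. As an alternative to the last computation, one could instead invoke \cref{normal_closure_in_semidirect} with $L = \ker\varphi$: every generator $n\cdot\varphi(l)(n\i)$ of the subgroup $D$ is then trivial, so $\gen{\gen{\ker\varphi}}_G = \ker\varphi$, which again shows $K$ is normal in $G$.
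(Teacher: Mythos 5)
Your proof is correct and uses essentially the same idea as the paper: the normality of $N$ is immediate (it is the kernel of the retraction onto $F_m$), and the normality of $K$ follows from the fact that $\varphi(k)=\id$ for $k\in\ker\varphi$, so that conjugation by $N$ fixes $K$ elementwise. The paper packages the second point by appealing to \cref{normal_closure_in_semidirect} to conclude $\gen{\gen{K}}_G = K$, which is precisely the alternative route you note at the end; your inline computation $(n,1)(1,k)(n,1)\i = (1,k)$ is just the relevant special case of that lemma's proof.
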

	
	\begin{proof}
		The normality of $N$ is immediate;
		$\varphi(k)(n) = n$ for any $k \in K$ and $n \in N$,
		so by the calculation in \cref{normal_closure_in_semidirect},
		$$\gen{\gen{K}}_G = \gen{\gen{1}}_N \cdot K = K$$
		and so $K$ is normal in $G$.
	\end{proof}

	\subsection{Proof of \cref{characteristic}}\label{ssec:detecting_isos_proof}
	
	The proof is admittedly complicated,
	so we give a plan for it.
	
	\begin{proof}[Overview of the proof of \cref{characteristic}]
		The main steps are as follows.
		\begin{enumerate}
			\item Notice that one of the images
			$\overline{N}$ and $\overline{K}$
			must be trivial.
			\item With the Hopf property of $F_m$ show that
			$\overline N = 1$ implies $M = N$.
			\item Show that $L = H \cap M$ is properly larger than $K$
			using the fact that $T$ isn't free and thus
			$H \twoheadrightarrow T \twoheadrightarrow \overline{H}$
			has kernel larger than $\ker(H \twoheadrightarrow T) = K$.
			\item Deduce that the subgroup $D \triangleleft N$
			in the formula $\gen{\gen{L}}_G = D\cdot L$
			from \cref{normal_closure_in_semidirect} is non-trivial.
			\item Use it to show $C_G(M) < C_N(D)$
			and get a contradiction with assumptions that
			${F_2 \hookrightarrow C_G(M)}$
			and ${F_2 \not \hookrightarrow C_N(D)}$.
		\end{enumerate}
	\end{proof}
	
	\begin{proof}[Proof of \cref{characteristic}]
		We follow the overview and indicate where steps start.
		
		\textbf{Step 1.}
		Consider the images $\overline N$ and $\overline K$
		of $N$ and $K$
		in the quotient $\overline G \cong F_m$,
		which is non-abelian free.
		Since $[N,K]=1$,
		then also in the quotient $[\overline N, \overline K] = 1$.
		
		Now, $\overline N$ and $\overline K$
		are normal in $\overline G \cong F_m$,
		so they are either trivial or non-abelian.
		However, non-abelian subgroups of free groups
		have trivial centralisers.
		Thus $\overline N = 1$, or $\overline K = 1$.
		
		\textbf{Step 2.}
		In the first case the implication is
		that $N < M$,
		but then there would be a surjection
		$$F_m \cong G/N \twoheadrightarrow G/M \cong F_m,$$
		but finitely generated free groups
		are Hopfian, implying that $N = M$,
		which is what we wanted to prove.
		
		\textbf{Step 3.}
		Otherwise $K < M$ and so
		$\overline H$ is a quotient of $H / K = T$.
		Let $$L := \ker(H \twoheadrightarrow \overline H) = H \cap M.$$
		Since $T = H/K$ isn't free, the map
		$$H \twoheadrightarrow T \twoheadrightarrow \overline H < \overline G \cong F_m$$
		must have a non-trivial kernel at the step $T \twoheadrightarrow \overline H$
		and so $L$ is properly larger than $K$.
		
		\textbf{Step 4.}
		$M$ is a normal subgroup of $G$ and $L < M$,
		so $M$ must also contain the normal closure $\gen{\gen{L}}_G$,
		which by \cref{normal_closure_in_semidirect}
		is equal to
		\begin{equation*}
			\gen{\gen{L}}_G =
			\underbrace{\gen{\gen{n \cdot \varphi(l)(n^{-1})\
						|\ l \in L, n \in N}}_N}_{=: D} \cdot L.
		\end{equation*}
		Notably, since $L$ is
		properly larger than $K = 1 \rtimes (\ker \varphi)$
		it contains some element $l_0 \not \in K$
		acting non-trivially,
		i.e. such that there is some $n_0 \in N$
		for which $\varphi(l_0)(n_0) \ne n_0$.
		Thus the subgroup $D$ is non-trivial.
		
		\textbf{Step 5.}
		Finally, since $\gen{\gen{L}}_G < M$,
		then also $C_G(M) < C_G\big(\gen{\gen{L}}_G\big)$, but
		$$C_G\big(\gen{\gen{L}}_G\big) = C_G(DL) = C_G(D) \cap C_G(L).$$
		
		$G$ retracts onto $H$ with kernel $N$,
		$H$ is non-abelian free
		and $L \triangleleft H$ is a non-trivial normal subgroup,
		so $C_H(L) = 1$ and the centraliser $C_G(L)$ is contained in $N$.
		Thus
		$$C_G(M) < C_G(\gen{\gen{L}}_G)
		= C_G(D) \cap C_G(L) = C_N(D) \cap C_N(L) < C_N(D).$$
		$D$ is a non-abelian normal subgroup of $N$
		and $C_N(D)$ can't contain a non-abelian free group,
		and thus $C_G(M)$ also doesn't contain one.
		
		This is a contradiction to the assumption
		that $C_G(M)$ contains an $F_2$,
		so $N = M$.
		
		Now, given two groups $G_i = N \rtimes_{\varphi_i} F_m$ for $i = 1, 2$
		an isomorphism ${\beta: G_1 \to G_2}$ must satisfy $\beta(N) = N$
		since the four conditions characterising $N$ are isomorphism-invariant.
		Now, taking $\alpha = {\beta|_N} \in \aut(N)$
		and $\gamma \in \aut(F_m)$
		induced on $G_1/N \to G_2/N$ we get
		$\alpha\varphi_1\alpha^{-1} = \varphi_2 \circ \gamma$.
	\end{proof}

	\section{Groups with infinitely many T-systems}\label{sec:T-systems}
	
	The previous section and \cref{characteristic} in particular
	showed that the question of isomorphism of the specific groups
	$N \rtimes_{\varphi_1} F_m$ and $N \rtimes_{\varphi_2} F_m$
	which we are constructing
	is equivalent to the existence of
	$\alpha \in \aut(N)$ and $\gamma \in \aut(F_m)$
	such that
	$\alpha\varphi_1(g)\alpha^{-1} = (\varphi_2\circ\gamma)(g)$
	for all $g \in F_m$,
	which we shorthand to $\ad_\alpha\circ\ \varphi_1 = \varphi_2\circ\gamma$.
	We want the groups to be pairwise non-isomorphic,
	so we need to show that $\ad_\alpha\circ\ \varphi_1 \ne \varphi_2\circ\gamma$
	for all $\alpha\in\aut(N)$ and $\gamma\in \aut(F_m)$.
	
	We will, in fact, prove a stronger condition.
	The homomorphisms $\varphi_i$ all have the same image in $\aut(N)$,
	let's call it $T$.
	We will show that for any $\delta \in \aut(T)$ and $\varepsilon \in \aut(F_m)$
	\begin{equation*}
		\delta\circ \varphi_i = \varphi_j \circ \varepsilon\quad\iff\quad i = j.
	\end{equation*}
	
	We start by giving this relation a name.
	
	\begin{definition}
		Two surjective homomorphisms
		$\varphi_i: F_m \twoheadrightarrow T$
		are \emph{Nielsen equivalent}
		if there exists $\varepsilon \in \aut(F_m)$
		such that $\varphi_1 = \varphi_2 \circ \varepsilon$,
		and \emph{T-equivalent}
		if there exist $\delta \in \aut(T)$ and $\varepsilon \in \aut(F_m)$
		such that $\delta \circ \varphi_1 = \varphi_2 \circ \varepsilon$.
	\end{definition}
	The study of Nielsen equivalence and of so-called \emph{T-systems}
	has been an area of active research.
	Notably, though not of direct relevance to us,
	\cite{louder2010nielsen} showed that all of the surjections
	from a finitely generated free group
	to a surface group $\pi_1(S)$ with $\chi(S) \le 0$ are Nielsen equivalent.
	
	\subsection{Torus link groups have infinitely many T-systems}\label{ssec:torus_link_groups}
	
	Most notably for us, there exist rather easy examples of groups
	which have infinitely many non-T-equivalent surjections from $F_2$.
	The examples we give---\emph{torus link groups}---%
	are constructed by amalgamating two copies of $\Z$
	along an infinite subgroup.
	
	An alternative family we could've used is
	hyperbolic $2$-bridge knot groups.
	They have infinitely many Nielsen systems
	of $2$ generators, as shown in \cite{heusener2010generating},
	but on the other hand the fundamental groups
	of their complements in $S^3$
	have finite outer automorphism groups,
	implying that there have infinitely many
	non-equivalent T-systems.
	Moreover, they are virtually special---%
	see \cite[Lemma 17.20 + Corollary 15.3]{wise2021structure}---%
	and techniques similar to those used here can show that they embed
	into $\GL_n(\Z), \out(F_n)$ and $MCG(S_n)$---see \cite{bridson2013subgroups}---%
	for sufficiently large $n$.
	
	\begin{proposition}\label{torus_links_T_systems}
		Let $p, q \in \N$ be such that $p, q \ge 2$ and $p + q \ge 5$.
		and $T$ be the one-relator group given by the presentation
		\begin{equation}\label{torus_link_presentation}
			T(p, q) := \pres{x, y}{x^p=y^q}.
		\end{equation}
		
		Then, there are infinitely many non-T-equivalent surjective maps
		${F_2 \twoheadrightarrow T}$.
	\end{proposition}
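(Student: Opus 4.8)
The plan is to reduce to an orbit‑counting problem for a thin subgroup of $\SL_2(\Z)$. Write $z=x^p=y^q$, so $\gen z\cong\Z$ is the centre of $T:=T(p,q)$; it is characteristic, and $\bar T:=T/\gen z\cong\Z/p\ast\Z/q$. Since $\out(\Z/p\ast\Z/q)\cong(\Z/p)^\times\times(\Z/q)^\times$ (times a factor swap if $p=q$) is finite and $\Hom(\bar T,\gen z)=0$, the group $\out(T)$ is finite; as inner automorphisms never change the Nielsen class of a surjection $F_2\twoheadrightarrow T$, and T-equivalence is Nielsen equivalence modulo the finite action of $\out(T)$, it suffices to exhibit infinitely many pairwise Nielsen‑inequivalent surjections $F_2\twoheadrightarrow T$. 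By the classical description of Nielsen classes in a free product of two finite cyclic groups, every generating pair of $\bar T$ is Nielsen‑equivalent to one of the finitely many pairs $(\bar x^{\,i},\bar y^{\,j})$ with $\gcd(i,p)=\gcd(j,q)=1$; so, after composing with an automorphism of $F_2$ and keeping only one of these finitely many ``types'', it is enough to count Nielsen classes of surjections $\varphi$ with $\varphi(e_1)=xz^{m}$, $\varphi(e_2)=yz^{n}$ for a fixed basis $(e_1,e_2)$ of $F_2$. Such a $\varphi$ is onto precisely when $\gcd(1+pm,1+qn)=1$: the subgroup it generates meets $\gen z$ in $\gen{z^{\gcd(1+pm,1+qn)}}$, and once it contains $z$ it contains $x$ and $y$. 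Putting $u=1+pm$, $v=1+qn$, the parameter set is
\[
\mathcal P=\bigl\{(u,v)\in\Z^2:\ u\equiv1\ (\mathrm{mod}\ p),\ v\equiv1\ (\mathrm{mod}\ q),\ \gcd(u,v)=1\bigr\}.
\]

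I would then identify Nielsen equivalence on $\mathcal P$ with a group action. The Nielsen moves $(a,b)\mapsto(a,ba^{-p})$ and $(a,b)\mapsto(ab^{-q},b)$ preserve the normal form $(xz^m,yz^n)$, and since $(xz^m)^p=z^{1+pm}=z^{u}$ and $(yz^n)^q=z^{v}$, they act on $(u,v)$ by $(u,v)\mapsto(u,v-qu)$ and $(u,v)\mapsto(u-pv,v)$ — that is, via the generators of
\[
\Gamma(p,q):=\Bigl\langle\begin{pmatrix}1&p\\0&1\end{pmatrix},\ \begin{pmatrix}1&0\\q&1\end{pmatrix}\Bigr\rangle\le\SL_2(\Z).
\]
The point is that these moves, together with a finite group of symmetries ($x\mapsto x^{-1}$, $y\mapsto y^{-1}$, the factor swap when $p=q$, and $\out(T)$), exhaust the Nielsen equivalences between normal forms: any conjugation by an element of $\ker(F_2\twoheadrightarrow\bar T)$ becomes, under $\varphi$, a conjugation by an element of the centre $\gen z$ and hence acts trivially on $(u,v)$. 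So $\mathcal P$ modulo T‑equivalence is $\mathcal P$ modulo $\Gamma(p,q)$ enlarged by that finite symmetry group, and it is enough to show $\Gamma(p,q)$ has infinitely many orbits on $\mathcal P$ (a finite enlargement cannot turn infinitely many orbits into finitely many).

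Finally, the hypothesis $p,q\ge2$ and $p+q\ge5$ forces $pq\ge6>4$, and for $pq>4$ the group $\Gamma(p,q)$ is, by the classical ping‑pong argument of Sanov and Brenner, free of rank $2$ and of infinite index in $\SL_2(\Z)$ — a thin (in particular infinite‑covolume, hence critical exponent $\delta<1$) Fuchsian group. Therefore a single $\Gamma(p,q)$-orbit of primitive integer vectors contains only $O(R^{2\delta})=o(R^2)$ vectors of norm at most $R$, while $\mathcal P$ has positive density among primitive vectors and so contains $\asymp R^2$ of them; hence no finite union of $\Gamma(p,q)$-orbits can exhaust $\mathcal P$. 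Thus $\mathcal P$ meets infinitely many $\Gamma(p,q)$-orbits, giving infinitely many Nielsen classes and therefore infinitely many T‑systems. (The borderline case $p=q=2$ excluded by $p+q\ge5$ is genuinely different: there $pq=4$, $\Gamma(2,2)$ is commensurable with $\Gamma(2)$ and has finite index, $T(2,2)$ is the Klein bottle group, and there are only finitely many T‑systems.)

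The step I expect to be the main obstacle is the identification in the second paragraph: showing rigorously that no generating pair of the amalgam $\gen x\ast_{\gen z}\gen y$ lifting the standard pair of $\bar T$ can, via a sequence of Nielsen moves, be transformed into one whose class is not already accounted for by $\Gamma(p,q)$ and the finite symmetry group — equivalently, that the subgroup of $\aut(F_2)$ fixing the surjection $F_2\twoheadrightarrow\bar T$ induces no larger group of transformations of $\mathcal P$. This is a Whitehead/Nielsen‑reduction analysis for the graph of groups $\gen x\ast_{\gen z}\gen y$; once it is granted, the reduction to $\bar T$ and the thin‑group orbit count are routine. (The conclusion itself — that torus knot and link groups admit infinitely many pairwise non‑T‑equivalent $2$-generating systems — is also available in the literature on Nielsen equivalence in Fuchsian and Seifert‑fibred‑space groups, so \cref{torus_links_T_systems} may alternatively be quoted.)
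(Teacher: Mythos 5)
Your approach is genuinely different from the paper's, and you have correctly identified where the real work lies. The paper disposes of the Nielsen-equivalence question in one stroke by citing Zieschang's classification theorem (\cite{zieschang1977generators}): every generating $m$-tuple of $T(p,q)$ is Nielsen-equivalent to a unique normal form $(x^a,y^b,1,\dots,1)$ subject to explicit normalisation inequalities, and distinct normal forms are pairwise Nielsen-inequivalent. It then quotes Schreier (\cite{schreier1924uber}) for the description of $\aut(T)$ and observes that these automorphisms do not change Nielsen class, so the infinitude of normal forms directly yields infinitely many T-systems. Your route instead passes to $\bar T=\Z/p*\Z/q$, lifts back to $T$ via the parameter set $\mathcal P$, identifies some Nielsen moves with the action of the thin group $\Gamma(p,q)\le\SL_2(\Z)$, and finishes with an orbit-counting argument (positive density of $\mathcal P$ versus $O(R^{2\delta})$, $\delta<1$, points per orbit). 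The congruence and orbit-count calculations you sketch are sound, and the finiteness of $\out(T)$ and the reduction from T-equivalence to Nielsen equivalence are both correct.

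The gap you flag, however, is not cosmetic. The assertion that the stabiliser in $\aut(F_2)$ of the standard surjection $F_2\twoheadrightarrow\bar T$ acts on $\mathcal P$ through $\Gamma(p,q)$ together with the listed finite group of symmetries---and nothing larger---is precisely the hard content of Zieschang's theorem restricted to rank two. You exhibit two Nielsen transformations that act via the two elementary matrices generating $\Gamma(p,q)$, but you give no argument that there are no further transformations of $\mathcal P$ coming from other elements of that stabiliser; \emph{a priori} a larger group could act and collapse infinitely many $\Gamma(p,q)$-orbits into finitely many classes. Filling this in requires a genuine Nielsen/Whitehead reduction for generating pairs of the amalgam $\gen x*_{\gen z}\gen y$, which is in effect a reproof of the quoted theorem. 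Without it, the proposal does not yet prove the proposition.

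As a comparison of what the two approaches buy: if your stabiliser computation were carried out, you would obtain the qualitative conclusion (infinitely many T-systems) by soft means---thin-group orbit counting---without needing Zieschang's sharp uniqueness of normal forms. The paper's citation gives more: the exact parametrisation of Nielsen classes by pairs $(a,b)$, which is also what makes the argument short. Your own closing remark, that the result can alternatively be quoted from the Nielsen-equivalence literature on Fuchsian and Seifert-fibred groups, is in fact exactly the paper's strategy.
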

	
	\begin{proof}
		\cite{zieschang1977generators} proves that
		any $m$-tuple of generators of $T$
		is Nielsen equivalent to one of the form
		$(x^a, y^b, 1, \ldots, 1)$ 
		where
		\begin{itemize}
			\item $\gcd(a, b)  = \gcd(a, p) = \gcd(b, q) = 1$, and
			\item $0 < 2a \le pb$, and $0 < 2b \le qa$.
		\end{itemize}
		It also proves that any such $m$-tuple generates $T$,
		and that such pairs $(x^a, y^b)$ and $(x^c, y^d)$
		are Nielsen equivalent if and only if
		$a = c$ and $b = d$.
		
		Additionally, \cite{schreier1924uber} showed
		that an automorphism of $T$ sends
		\begin{equation*}
			x \mapsto t\ \sigma(x)^\varepsilon t^{-1},
			\quad y \mapsto t\ \sigma(y)^\varepsilon t^{-1},
		\end{equation*}
		for $\varepsilon = \pm 1$, $t \in T$
		and $\sigma$ being either identity,
		or possibly the swap $x \leftrightarrow y$,
		but only if $a = b$.
		None of these maps change the Nielsen class of a pair,
		so this implies that
		$T$ has indeed infinitely many
		non-T-equivalent generating pairs.
	\end{proof}
	
	\subsection{Finding torus link groups in $\out(N)$}\label{ssec:finding_torus_link_gps}
	
	In order to be able to construct our examples
	of infinite families of extensions $N$-by-$F_2$
	which are non-isomorphic,
	we will need to find a torus link group $T=T(p,q)$
	with $p, q \ge 2$ and $p + q \ge 5$
	in the outer automorphism group $\out(N)$.
	
	In all of the cases we consider
	the embedding $T \hookrightarrow \out(N)$
	factors through the map $\aut(N) \twoheadrightarrow \out(N)$
	
	\begin{proposition}\label{T_is_free_by_cyclic}
		Let $T = T(p,q)$ be the torus link group
		defined by the presentation~\ref{torus_link_presentation}.
		Let $\pi: T \to \left(\Z/\lcm(p,q)\right)$ be defined by
		$\pi(x) = \lcm(p,q) /p$ and $\pi(y) = \lcm(p, q) / q$.
		
		Then $\ker \pi \cong F_k \times \Z$
		where ${k = (pq - p - q)/\gcd(p,q) + 1}$.
	\end{proposition}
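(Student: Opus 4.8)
The plan is to use the standard description of $T=T(p,q)$ both as a nontrivial amalgamated free product and as a central extension. Write $d=\gcd(p,q)$ and $\ell=\lcm(p,q)=pq/d$, so that $\pi(x)=\ell/p=q/d$ and $\pi(y)=\ell/q=p/d$; the map is well defined since $p\cdot(q/d)=q\cdot(p/d)=\ell\equiv 0\pmod\ell$, and surjective since $\gcd(q/d,p/d)=1$. The element $z:=x^{p}=y^{q}$ is central in $T$ (it is simultaneously a power of $x$ and of $y$), has infinite order, and generates the centre: $T$ is the amalgam $\langle x\rangle *_{\langle x^{p}\rangle=\langle y^{q}\rangle}\langle y\rangle\cong\Z*_{\Z}\Z$ with edge group $\langle z\rangle\cong\Z$, and $T/\langle z\rangle\cong\Z/p*\Z/q$. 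As $\pi(z)=\ell\equiv 0$, the homomorphism $\pi$ factors as $\bar\pi\circ\rho$, where $\rho\colon T\twoheadrightarrow T/\langle z\rangle$ is the quotient and $\bar\pi\colon\Z/p*\Z/q\twoheadrightarrow\Z/\ell$ sends the images $\bar x,\bar y$ of $x,y$ to $q/d,p/d$. Thus $\ker\pi=\rho^{-1}(\bar K)$ with $\bar K:=\ker\bar\pi$, giving a central extension
\begin{equation*}
	1\longrightarrow\langle z\rangle\longrightarrow\ker\pi\longrightarrow\bar K\longrightarrow 1,\qquad\langle z\rangle\cong\Z.
\end{equation*}

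Next I would identify $\bar K$. It is torsion-free: every torsion element of $\Z/p*\Z/q$ is conjugate into $\langle\bar x\rangle$ or $\langle\bar y\rangle$, whereas $\bar\pi$ is injective on each of these (the orders of $q/d$ and $p/d$ in $\Z/\ell$ are exactly $p$ and $q$), so no nontrivial conjugate of $\langle\bar x\rangle$ or $\langle\bar y\rangle$ meets $\bar K$. A torsion-free subgroup of a free product of finite cyclic groups is free by the Kurosh subgroup theorem, so $\bar K$ is free. To compute its rank I would let $\bar K$ act on the Bass--Serre tree $\mathcal T$ of the splitting $\Z/p*\Z/q$, which has one orbit of edges with trivial stabiliser and two orbits of vertices with stabilisers $\Z/p$ and $\Z/q$. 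Since $\bar K$ is torsion-free it acts freely, so $\bar K\cong\pi_{1}(\bar K\backslash\mathcal T)$ is free of rank $E-V+1$, where the finite quotient graph has $E=[\,\Z/p*\Z/q:\bar K\,]=\ell$ edges and, since $\bar K$ is normal and $\bar\pi$ is injective on each vertex group, $V=\ell/p+\ell/q=q/d+p/d$ vertices. Hence $\bar K\cong F_k$ with $k=\ell-(q/d+p/d)+1=(pq-p-q)/d+1$, the claimed value. (The same count runs via multiplicativity of the rational Euler characteristic under finite-index subgroups: $\chi(\Z/p*\Z/q)=\tfrac1p+\tfrac1q-1$ and $[\,\Z/p*\Z/q:\bar K\,]=\ell$ give $\chi(\bar K)=1-k=q/d+p/d-pq/d$.)

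Finally, since $\bar K\cong F_k$ is free, the central extension splits: lifting a free basis of $\bar K$ to $\ker\pi$ defines a homomorphic section $F_k\to\ker\pi$, and because $\langle z\rangle$ is central the resulting semidirect product is direct. Therefore $\ker\pi\cong\Z\times F_k\cong F_k\times\Z$, as required. The only step with genuine content is the rank count for $\bar K$; the remainder is routine bookkeeping with amalgams, free products and their subgroups, for which I would cite standard references (the structure of torus-link groups, the Kurosh subgroup theorem, and Bass--Serre theory) rather than reproduce the proofs.
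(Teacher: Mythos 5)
Your proof is correct and follows essentially the same route as the paper's: both pass to the central quotient $T/\langle z\rangle\cong\Z/p*\Z/q$, observe that $\ker\pi$ projects to a torsion-free (hence free) subgroup there, compute the rank, and then note that a central extension of $\Z$ by a free group splits as a direct product. The only cosmetic difference is that you count edges and vertices of the quotient Bass--Serre graph to get the rank, while the paper invokes multiplicativity of the rational Euler characteristic directly (you note this equivalence in your parenthetical), and you spell out the final splitting argument that the paper simply cites as a standard fact.
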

	
	\begin{proof}
		The centre of $T$ is the subgroup $Z = \gen{x^p} = \gen{y^q}$,
		which lies in the kernel of $\pi$,
		so $\pi$ factors through $G \twoheadrightarrow G/Z$,
		which is isomorphic to
		$\left(\Z/ p\right) * \left(\Z / q\right)$.
		
		All torsion elements of $G/Z$ inject under $G/Z \twoheadrightarrow \left(\Z / \lcm(p, q)\right)$,
		so $(\ker \pi) / Z$ is a torsion-free virtually free group
		and thus it must be free.
		We can compute its Euler characteristic
		\begin{equation*}
			\chi\left((\ker \pi) / Z\right)
			= \lcm(p, q) \cdot \chi\left((\Z/p) * (\Z / q)\right)
			= \lcm(p, q) \cdot \left(1/p + 1/q - 1\right).
		\end{equation*}
		Since $pq = \lcm(p, q) \cdot \gcd(p, q)$
		for $k = (pq - p - q)/\gcd(p,q) + 1$
		we get $(\ker \pi)/Z \cong F_k$.
		
		Finally, $\ker \pi$ is a central extension
		of $\Z$ by $F_k$ and any such extension
		is isomorphic to $F_k \times \Z$.
	\end{proof}
	
	As a result, we got that $T = T(3,3)$
	is a $(F_2 \times \Z)$-by-$(\Z/3)$ extension.
	
	For embedding $T$ into outer automorphism groups
	we will need the Universal Embedding Theorem,
	likely proved first in \cite{krasner1951produit}.
	
	\begin{lemma}[Universal Embedding Theorem]\label{universal_embedding}
		Let $N$ and $Q$ be groups,
		and $G$ be an $N$-by-$Q$ extension.
		Then $G$ embeds into the wreath product $N \wr Q$.
	\end{lemma}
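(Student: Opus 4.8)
The plan is to realise $G$ inside $N\wr Q$ through its monomial (imprimitive) permutation representation on the set $N\times Q$. Write $N\wr Q=B\rtimes Q$, where $B$ is the group of all functions $f\colon Q\to N$ under pointwise multiplication, $Q$ acts on $B$ by $(q\cdot f)(x)=f(q^{-1}x)$, and the product is $(f',q')(f,q)=\bigl(f'\cdot(q'\cdot f),\,q'q\bigr)$. This group acts on $N\times Q$ by
\[
 (f,q)\cdot(n,x)\;=\;\bigl(f(qx)\,n,\;qx\bigr),
\]
and a direct check shows this is a left action which is \emph{faithful}: if $(f,q)$ fixes every point then $qx=x$ for all $x$, forcing $q=1$, and then $f(x)\cdot n=n$ for all $n,x$, forcing $f\equiv 1$. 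Write $\rho\colon N\wr Q\hookrightarrow\Sym(N\times Q)$ for the resulting embedding.

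Next I would fix a set-theoretic section $s\colon Q\to G$ of $\pi$ with $s(1)=1$; since $\pi\circ s=\mathrm{id}$, the assignment $(n,x)\mapsto s(x)\,\iota(n)$ is a bijection $N\times Q\to G$. Transporting the left-regular action of $G$ on itself along this bijection produces an action $\lambda\colon G\to\Sym(N\times Q)$, which is a homomorphism and is faithful. I compute it: fix $g\in G$, put $p:=\pi(g)$, and note that $\pi\bigl(s(px)^{-1}g\,s(x)\bigr)=1$, so there is a unique $b_g(x)\in N$ with $\iota\bigl(b_g(x)\bigr)=s(px)^{-1}g\,s(x)$; then
\[
 g\cdot\bigl(s(x)\iota(n)\bigr)=g\,s(x)\,\iota(n)=s(px)\,\iota\bigl(b_g(x)\bigr)\iota(n)=s(px)\,\iota\bigl(b_g(x)\,n\bigr),
\]
so in coordinates $\lambda(g)\colon(n,x)\mapsto\bigl(b_g(x)\,n,\;px\bigr)$. (It is essential to coordinatise $G$ by $s(x)\iota(n)$ and not by $\iota(n)s(x)$: with the latter the $N$-coordinate would be twisted by the conjugation automorphism $\iota(n)\mapsto s(x)\iota(n)s(x)^{-1}$ of $N$, and $\lambda(g)$ would no longer lie in $N\wr Q$.)

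Comparing the two displays, $\lambda(g)$ is exactly $\rho$ applied to the element $\Phi(g):=\bigl(f_g,\,p\bigr)\in N\wr Q$, where $f_g\colon Q\to N$ is determined by $\iota\bigl(f_g(x)\bigr)=s(x)^{-1}g\,s\bigl(\pi(g)^{-1}x\bigr)$ (equivalently $f_g(x)=b_g(p^{-1}x)$); indeed $\rho(f_g,p)\colon(n,x)\mapsto\bigl(f_g(px)\,n,px\bigr)=\bigl(b_g(x)\,n,px\bigr)$. Thus $\rho\circ\Phi=\lambda$. Since $\lambda(G)$ lies in the image of the injective homomorphism $\rho$, we may write $\Phi=\rho^{-1}\circ\lambda$ there; hence $\Phi$ is a group homomorphism, and it is injective because $\lambda$ is. So $G\hookrightarrow N\wr Q$ via $\Phi$. (Equivalently one could define $\Phi$ by the explicit formula above from the outset and verify directly that it is an injective homomorphism; the packaging through $\lambda$ and $\rho$ just makes that verification automatic.)

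The only real obstacle is bookkeeping: one must pin down mutually consistent conventions for the handedness of the $Q$-action on $B$, for the multiplication in $N\wr Q$, and for the imprimitive action on $N\times Q$, and—as flagged—choose the coordinatisation $G\cong N\times Q$ that keeps $\lambda$ inside the wreath product. With those fixed, every identity above is a one-line check. (For infinite $Q$ one should read $N\wr Q$ as the unrestricted wreath product, since $f_g$ need not have finite support; in the application in \cref{ssec:finding_torus_link_gps} the quotient $Q$ is finite, so this is immaterial.)
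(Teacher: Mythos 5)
The paper does not actually prove this lemma; it only cites \cite{krasner1951produit}, so there is no ``paper's proof'' to compare against. Your proof is a correct and complete rendering of the standard Kaloujnine--Krasner argument: realise $N\wr Q$ as permutations of $N\times Q$ via the imprimitive action, coordinatise $G$ by $(n,x)\mapsto s(x)\iota(n)$ using a normalised section, and observe that the transported left-regular action of $G$ lands inside the image of $\rho$. I checked the bookkeeping and it is internally consistent: with the conventions $(q\cdot f)(x)=f(q^{-1}x)$ and $(f,q)\cdot(n,x)=\bigl(f(qx)n,\,qx\bigr)$, the action is indeed a left action and is faithful; the transported regular action has the form $\lambda(g)\colon(n,x)\mapsto\bigl(b_g(x)n,\,\pi(g)x\bigr)$; setting $f_g(x)=b_g(\pi(g)^{-1}x)$ gives $\rho(f_g,\pi(g))=\lambda(g)$; and since $\lambda$ is an injective homomorphism and $\rho$ is injective, $\Phi=(g\mapsto(f_g,\pi(g)))$ is an injective homomorphism. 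Your caveat about the coordinatisation is also correct: with $(n,x)\mapsto\iota(n)s(x)$ the $N$-coordinate of $g\cdot(-)$ would acquire a conjugation twist $n\mapsto\varphi_g(n)$, pushing $\lambda(g)$ outside $\rho(N\wr Q)$. The remark that $N\wr Q$ must be read as the unrestricted wreath product for infinite $Q$ is accurate and immaterial for the paper's application, where $Q=\Z/3$.
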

	
	Thus, in order to ensure an embedding $T \hookrightarrow \out(N)$
	it is enough to find $T' := (F_2 \times \Z) \wr (\Z/ 3)$,
	which is
	$$\left((F_2 \times \Z)\times (F_2 \times \Z)\times (F_2 \times \Z)\right) \rtimes (\Z/3)$$
	with $(\Z/3)$ permuting the factors cyclically.
	
	\begin{proposition}\label{embeds_into_Out(Fn)}
		The group $T' := (F_2 \times \Z) \wr (\Z/ 3)$
		embeds into $\aut(F_9)$
		and thus also into $\out(F_n)$
		for any $n \ge 10$.
	\end{proposition}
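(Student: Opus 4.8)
The plan is to mirror the two-level structure of $T'=\bigl((F_2\times\Z)\times(F_2\times\Z)\times(F_2\times\Z)\bigr)\rtimes(\Z/3)$ against the free-product decomposition $F_9=A_1*A_2*A_3$ with each $A_i$ free of rank $3$. The first, and really the only non-formal, step is to realise one copy of $F_2\times\Z$ inside $\aut(F_3)$. Write $F_3=\gen{a,b,c}$ and $\ad_g\colon x\mapsto gxg^{-1}$. I would take the free factor to be $\gen{\ad_a,\ad_b}$: since $F_3$ is centreless the map $g\mapsto\ad_g$ is injective, so $\gen{\ad_a,\ad_b}\cong\gen{a,b}\cong F_2$. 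For the commuting $\Z$ I would use the automorphism $\psi\colon a\mapsto a,\ b\mapsto b,\ c\mapsto ca$ (an automorphism, with inverse $c\mapsto ca^{-1}$); it has infinite order because $\psi^k(c)=ca^k$, and $\psi\,\ad_a\,\psi^{-1}=\ad_{\psi(a)}=\ad_a$ and similarly for $b$, so $\psi$ centralises $\gen{\ad_a,\ad_b}$. Finally $\gen{\psi}\cap\gen{\ad_a,\ad_b}=1$: if $\psi^k=\ad_g$ then $g$ is fixed under conjugation by $a$ and by $b$, so $g\in C_{F_3}(a)\cap C_{F_3}(b)=\gen{a}\cap\gen{b}=1$, forcing $k=0$. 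Hence $\gen{\ad_a,\ad_b,\psi}\cong F_2\times\Z$ inside $\aut(F_3)$.

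Next I would upgrade this to the wreath product inside $\aut(F_9)$. Writing $A_i=\gen{a_i,b_i,c_i}$, the natural map $\aut(A_1)\times\aut(A_2)\times\aut(A_3)\to\aut(F_9)$ sending a triple to the automorphism restricting to each factor is injective, and the permutation $\tau\colon a_i\mapsto a_{i+1},\ b_i\mapsto b_{i+1},\ c_i\mapsto c_{i+1}$ (indices mod $3$) is an order-$3$ automorphism of $F_9$ conjugating this base group by cyclically permuting the factors; together these give a homomorphism $\aut(F_3)\wr(\Z/3)\to\aut(F_9)$. Its injectivity is routine: every element in the image of the base group sends each $A_i$ into itself, whereas $\tau^k$ with $k\not\equiv0\pmod 3$ sends $A_1$ off itself, so if $\beta\tau^k$ maps to the identity then $k\equiv0$ and hence $\beta=1$. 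Composing with the coordinatewise inclusion $(F_2\times\Z)\wr(\Z/3)\hookrightarrow\aut(F_3)\wr(\Z/3)$ from the previous paragraph yields $T'\hookrightarrow\aut(F_9)$.

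For the last clause I would write $F_n=F_9*\gen{t}*F_{n-10}$ for $n\ge 10$ and extend each $\psi\in\aut(F_9)$ to $\hat\psi\in\aut(F_n)$ fixing $t$ and $F_{n-10}$ pointwise; this is visibly an injective homomorphism $\aut(F_9)\hookrightarrow\aut(F_n)$, and I claim it stays injective after composing with $\aut(F_n)\twoheadrightarrow\out(F_n)$. Indeed, if $\hat\psi=\ad_g$ is inner then $g$ centralises $t$, so $g\in C_{F_n}(t)=\gen{t}$, say $g=t^k$; but for any $x\in F_9\setminus\{1\}$ and $k\ne0$ the reduced form of $t^kxt^{-k}$ in the free product involves $t$, so it cannot equal $\psi(x)\in F_9$ unless $k=0$. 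Thus $\hat\psi$ is inner only if it is trivial, which gives $\aut(F_9)\hookrightarrow\out(F_n)$ and hence $T'\hookrightarrow\out(F_n)$.

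I expect no serious obstacle here: the only step that requires an idea rather than bookkeeping is exhibiting $F_2\times\Z$ inside $\aut(F_3)$ — recognising that inner automorphisms already supply a rank-$2$ free subgroup of $\aut(F_3)$, and that a transvection fixing the other two basis elements supplies an infinite-order automorphism commuting with it. Every remaining injectivity check reduces to the facts that nontrivial subgroups of free groups have small centralisers and that one can detect the presence of a free-basis letter from reduced forms in a free product.
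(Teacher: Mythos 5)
Your proof is correct and follows the same overall strategy as the paper: write $F_9$ as a free product of three copies of $F_3$, locate a copy of $F_2\times\Z$ in $\aut(F_3)$, tile three commuting copies inside $\aut(F_9)$ and conjugate them cyclically by an order-$3$ automorphism, then pass to $\out(F_n)$ for $n\ge 10$ via $F_n=F_9*F_{n-9}$. The one genuine difference is how the copy of $F_2\times\Z$ inside $\aut(F_3)$ is realised. The paper takes three transvections supported on a single basis letter $x_0$ of $\gen{x_0,x_1,x_2}$: two right-multiplications $x_0\mapsto x_0x_1$, $x_0\mapsto x_0x_2$ generating the $F_2$, and a left-multiplication $x_0\mapsto x_1x_0$ generating the $\Z$; the (unstated) verification rests on the observation that the automorphisms of the form $x_0\mapsto ux_0w$ with $u,w\in\gen{x_1,x_2}$ and $x_1,x_2$ fixed form a group isomorphic to $F_2\times F_2$. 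You instead take $\gen{\ad_a,\ad_b}$ for the $F_2$, which makes freeness immediate from centrelessness of $F_3$, and a single transvection $\psi\colon c\mapsto ca$ for the $\Z$, with commutation following from the identity $\psi\,\ad_g\,\psi^{-1}=\ad_{\psi(g)}$; that is arguably cleaner conceptually but buys nothing essential. You also spell out the final step $\aut(F_n)\to\out(F_n)$ (centraliser of a complementary free generator $t$, then normal forms in the free product), where the paper merely asserts that none of the extended nontrivial automorphisms is inner. One small wording slip: ``$g$ is fixed under conjugation by $a$ and by $b$'' should say that $a$ and $b$ are fixed under conjugation by $g$, i.e.\ $g\in C_{F_3}(a)\cap C_{F_3}(b)$ --- which is in fact what you then use.
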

	
	\begin{proof}
		We need to find three copies of $F_2 \times \Z$
		commuting with each other
		and an automorphism $\sigma$ which permutes them.
		
		Let's fix a free basis $x_0, \ldots, x_8$
		and define homomorphisms
		$f_i, g_i, h_i$ for ${i = 0, 1, 2, }$
		and $\sigma$ as follows.
		\begin{itemize}
			\item $f_i$ fixes all basis elements except $x_{3i}$
			and sends $x_{3i} \mapsto x_{3i}\cdot x_{3i+1}$,
			\item $g_i$ fixes all basis elements except $x_{3i}$
			and sends $x_{3i} \mapsto x_{3i}\cdot x_{3i+2}$,
			\item $h_i$ fixes all basis elements except $x_{3i}$
			and sends $x_{3i} \mapsto x_{3i+1} \cdot x_{3i}$,
			\item $\sigma(x_i) := x_{i + 3}$, for $i = 0, \ldots, 8$,
		\end{itemize}
		where the indices are taken modulo $9$.
		Then $\gen{f_i, g_i} \cong F_2$, $\gen{h_i} \cong \Z$,
		these factors all commute and $\sigma$ permutes them as needed.
		
		Finally, for $n \ge 10$ we have $F_n \cong F_9 * F_{n-9}$
		and we can extend any automorphism of $F_9$
		by identity on the factor $F_{n-9}$.
		Since $F_{n-9}$ is non-trivial,
		none of these extended automorphisms is inner,
		so $T'$ indeed embeds into $\out(F_n)$.
	\end{proof}
	
	\begin{proposition}\label{embeds_into_Out(Sigma)}
		For $n \ge 5$,
		the group $T' := (F_2 \times \Z) \wr (\Z/ 3)$
		embeds into $\mcg(S_n)$---%
		the mapping class group
		of the closed orientable surface of genus $n$%
		---which is an index $2$ subgroup of $\out(\Sigma_n)$,
		where $\Sigma_n$ is the fundamental group
		of a genus $n$ closed orientable surface.
		Furthermore, the embedding factors through
		$\aut(\Sigma_n) \twoheadrightarrow \out(\Sigma_n)$.
	\end{proposition}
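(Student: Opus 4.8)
The plan is to argue geometrically, in parallel with the proof of \cref{embeds_into_Out(Fn)} but now using mapping class group technology. By the Dehn--Nielsen--Baer theorem, for $n\ge 1$ one has $\out(\Sigma_n)\cong\mcg^{\pm}(S_n)$, with the orientation-preserving subgroup $\mcg(S_n)\cong\out^{+}(\Sigma_n)$ of index $2$; so it suffices to produce a copy of $T'$ inside $\mcg(S_n)$ built from homeomorphisms that all fix a common basepoint. First I would exhibit a connected compact surface $S'$ with a single boundary component whose mapping class group $\mcg(S')$ (fixing $\partial S'$ pointwise) contains a copy of $F_2\times\Z$ generated by homeomorphisms supported in the interior of $S'$. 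For $S'=S_{1,1}$ one has $\mcg(S_{1,1})\cong B_3$, with centre generated by the boundary twist; since $B_3/Z(B_3)\cong\PSL_2(\Z)$ contains a free subgroup of rank $2$ and $H^2(F_2;\Z)=0$, the preimage of such an $F_2$ splits as $F_2\times\Z$. (Alternatively one can pick a separating curve $\delta$ in a slightly larger $S'$ and combine a free subgroup of the mapping class group of one side with the Dehn twist $T_\delta$, which commutes with it.)

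Next I would realise $S_n$ together with an orientation-preserving homeomorphism $\sigma$ of order exactly $3$, chosen so that $S_n$ decomposes $\sigma$-equivariantly as the union of three pairwise disjoint copies $S'_0,S'_1,S'_2$ of $S'$ — cyclically permuted by $\sigma$ — glued along their boundary circles to a complementary $\Z/3$-invariant subsurface $P$, with $\sigma$ fixing (at least) one point $\ast\in P$ disjoint from all the $S'_i$. Such a configuration exists, for instance by realising $S_n$ as a cyclic triple branched cover of the sphere, or by an explicit symmetric gluing with $P$ a $120^\circ$-symmetric pair of pants or genus-$2$ surface with three holes; a count of Euler characteristics shows the picture is housed in a closed surface of genus $n$ as soon as $n\ge 5$. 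Put $A_0\cong F_2\times\Z$ for the copy supported on $S'_0$ and $A_i:=\sigma^i A_0\sigma^{-i}$, which is supported on $S'_i$.

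Then I would assemble the embedding. By the subsurface inclusion theorem -- each $\partial S'_i$ is essential and the three are non-isotopic in $S_n$, and $S'_i$ is not an annulus -- the maps $\mcg(S'_i)\hookrightarrow\mcg(S_n)$ are injective, and since the $S'_i$ are pairwise disjoint and essential the images pairwise commute and generate their internal direct product $\mcg(S'_0)\times\mcg(S'_1)\times\mcg(S'_2)$. Hence $\langle A_0,A_1,A_2\rangle=A_0\times A_1\times A_2\cong(F_2\times\Z)^3$, which is torsion-free, so $\langle\sigma\rangle$ meets it trivially; since $\sigma$ has order $3$ and conjugation by $\sigma$ cyclically permutes $A_0\to A_1\to A_2\to A_0$, the subgroup $\langle A_0,A_1,A_2,\sigma\rangle\le\mcg(S_n)$ is isomorphic to $(F_2\times\Z)\wr(\Z/3)=T'$. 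Via Dehn--Nielsen--Baer this is the required embedding $T'\hookrightarrow\mcg(S_n)\cong\out^{+}(\Sigma_n)$, index $2$ in $\out(\Sigma_n)$.

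Finally, for the factorisation through $\aut(\Sigma_n)\twoheadrightarrow\out(\Sigma_n)$: take the basepoint to be the $\sigma$-fixed point $\ast\in P$. Each generator of each $A_i$ is a product of Dehn twists supported inside $S'_i$, hence fixes $\ast$, and $\sigma$ fixes $\ast$ too, so the whole copy of $T'$ lifts to $\mcg(S_n,\ast)$. Since $S_n$ is aspherical, the natural map $\mcg(S_n,\ast)\to\aut(\pi_1(S_n,\ast))=\aut(\Sigma_n)$ is injective, and it sits in the Birman exact sequence compatibly with $\aut(\Sigma_n)\twoheadrightarrow\out(\Sigma_n)$ and with $\mcg(S_n,\ast)\twoheadrightarrow\mcg(S_n)$; composing, the embedding $T'\hookrightarrow\out(\Sigma_n)$ factors as $T'\hookrightarrow\aut(\Sigma_n)\twoheadrightarrow\out(\Sigma_n)$, as claimed. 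I expect the main obstacle to be the middle step: pinning down a genuinely $\Z/3$-equivariant decomposition of $S_n$ into the three subsurfaces plus a complementary piece carrying a fixed point of the rotation, and checking via Euler characteristic that it fits in genus $\le n$ for $n\ge 5$. The injectivity of the subsurface inclusions and the commuting/direct-product assertions are then standard facts about mapping class groups, and the asphericity input for the last step is routine.
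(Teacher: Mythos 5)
Your argument is correct in outline and reaches the same conclusion, but it takes a genuinely different route to the two technical points where the paper does real work. For the $F_2\times\Z$ factors, the paper proceeds entirely with curves: it produces nine simple closed curves $\alpha_i,\beta_i,\gamma_i$ ($i=0,1,2$), disjoint except that $\alpha_i$ meets $\beta_i$ once, and invokes the classification of two--Dehn-twist subgroups of $\mcg(S)$ (Ishida) to get $\gen{T_{\alpha_i}^2,T_{\beta_i}}\cong F_2$, while the $\Z$ factor is simply $\gen{T_{\gamma_i}}$, with commutativity coming from disjointness. You instead place each $F_2\times\Z$ inside the mapping class group of an embedded one-holed torus $S'_i$, identifying $\mcg(S_{1,1})\cong B_3$, passing to an $F_2\leq B_3/Z(B_3)\cong\PSL_2(\Z)$, and splitting the central $\Z$-extension via $H^2(F_2;\Z)=0$; commutativity and faithfulness then come from the subsurface-inclusion theorem. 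Both routes are sound. Yours is more conceptual and packages the $\Z$ as the boundary twist (so the whole $F_2\times\Z$ sits inside a single genus-one subsurface), whereas the paper's is more hands-on and needs an extra disjoint curve $\gamma_i$ per block.

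For the order-$3$ symmetry with a fixed point, the paper simply exhibits it: Figure~\ref{order3} gives three explicit $\Z/3$-symmetric decompositions of $S_n$, one for each residue of $n\bmod 3$, with the curves $\alpha_i,\beta_i,\gamma_i$ drawn so that the rotation $R$ cyclically permutes them and visibly fixes a point. You defer this to a Riemann--Hurwitz/Euler-characteristic argument, and you flag it yourself as the main remaining obstacle. That is the one real gap in your write-up: as stated, you have not produced the $\Z/3$-equivariant decomposition for all $n\ge5$, only indicated that it should exist; the paper closes this by picture. Everything else in your proposal --- subsurface injectivity, the internal direct product of the $A_i$, the wreath-product identification, the Dehn--Nielsen--Baer step, and the lift through a common fixed point to factor via $\aut(\Sigma_n)\twoheadrightarrow\out(\Sigma_n)$ --- matches the paper's logic and is correct.
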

	
	\begin{proof}
		\cref{order3} shows decompositions
		with a symmetry of order $3$
		which extends to a homeomorphism $R$
		of the surface $S := S_n$.
		We give simple closed curves
		$\alpha_i, \beta_i, \gamma_i \sub S$
		for $i = 0, 1, 2$ such that
		$$R\colon\quad
		\alpha_i \mapsto \alpha_{i+1},\quad
		\beta_i \mapsto \beta_{i+1},\quad
		\gamma_i \mapsto \gamma_{i+1}$$
		and such that all all are disjoint
		apart from $\alpha_i$ and $\beta_i$ for $i = 0, 1, 2$,
		which have one intersection.
		
		This implies---according to the classification of \cite{ishida1996structure} of
		subgroups of $\mcg(S)$ generated by two Dehn twists---%
		that
		$\gen{T_{\alpha_i}^2, T_{\beta_i}} \cong F_2$
		and $\gen{T_{\gamma_i}} \cong \Z$,
		and that these all groups commute with each other,
		while being permuted in a cyclic manner by $R$.
		
		This gives a copy of $T'$ inside $\mcg(S_n)$,
		which is isomorphic to
		an index $2$ subgroup of $\out(\Sigma_n)$
		by Dehn-Nielsen-Baer Theorem---%
		see \cite[Theorem 8.1.]{farb2011primer}.
		
		Since $R$ and the $T_{\alpha_i}, T_{\beta_i}, T_{\gamma_i}$
		for $i = 0, 1$ and $2$ fix points of the surface
		we can set the basepoint to be one of them
		and factor the embedding through
		$\aut(\pi_1(S_n)) \twoheadrightarrow \out(\pi_1(S_n))$.
		
		\begin{figure}[p]
			\begin{center}
				\begin{tikzpicture}[l/.style = {font=\boldmath},]
					
					\coordinate (offset0) at (0, 0.35);
					\coordinate (offset1) at (0, 1);
					\coordinate (offset2) at (0, 0.05);
					
					\coordinate (gamma0) at (-1.4, 0);
					\coordinate (alpha0) at (-2.2, 0.1);
					\coordinate (beta0) at (-2.74, -0.72);
					\coordinate (sigma) at (0.4, 1.4);
					
					\node[inner sep=0pt] (mod0) at (0,11.5)
					{\includegraphics[page=2, height=8cm, trim={3cm 16cm 3cm 1cm}]{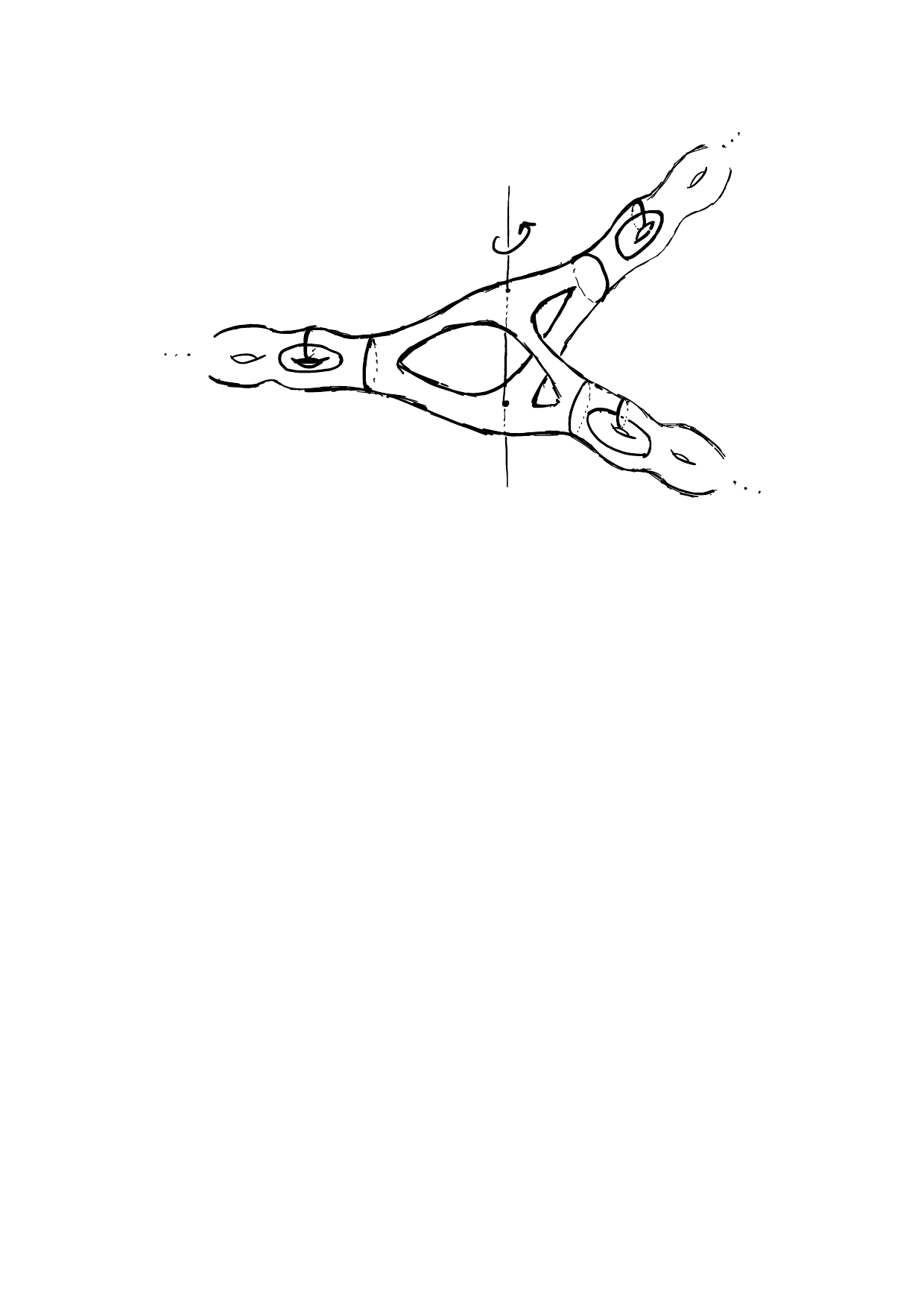}};
					\node[inner sep=0pt] (mod1) at (0,5)
					{\includegraphics[page=3, height=8cm, trim={3cm 16cm 3cm 1cm}]{pictures.pdf}};
					\node[inner sep=0pt] (mod2) at (0,0)
					{\includegraphics[page=1, height=8cm, trim={3cm 16cm 3cm 1cm}]{pictures.pdf}};
					
					\node[l] at ($(mod0)+(gamma0)+(offset0)$) {$\gamma_0$};
					\node[l] at ($(mod1)+(gamma0)+(offset1)$) {$\gamma_0$};
					\node[l] at ($(mod2)+(gamma0)+(offset2)$) {$\gamma_0$};
					
					\node[l] at ($(mod0)+(alpha0)+(offset0)$) {$\alpha_0$};
					\node[l] at ($(mod1)+(alpha0)+(offset1)$) {$\alpha_0$};
					\node[l] at ($(mod2)+(alpha0)+(offset2)$) {$\alpha_0$};
					
					\node[l] at ($(mod0)+(beta0)+(offset0)$) {$\beta_0$};
					\node[l] at ($(mod1)+(beta0)+(offset1)$) {$\beta_0$};
					\node[l] at ($(mod2)+(beta0)+(offset2)$) {$\beta_0$};
					
					\node[l] at ($(mod0)+(sigma)+(offset0)$) {$R$};
					\node[l] at ($(mod1)+(sigma)+(offset1)$) {$R$};
					\node[l] at ($(mod2)+(sigma)+(offset2)$) {$R$};
				\end{tikzpicture}
			\end{center}
			\caption{Decomposition of $S_g$ depending on $g \mod 3$.}
			\label{order3}
		\end{figure}
		
	\end{proof}
	
	\begin{proposition}\label{embeds_into_GLnZ}
		For $n \ge 12$,
		the group $T' := (F_2 \times \Z) \wr (\Z/ 3)$
		embeds into $\GL_n(\Z) \cong \out(\Z^n)$.
	\end{proposition}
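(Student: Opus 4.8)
The plan is to imitate the proof of \cref{embeds_into_Out(Fn)}. Since $\Z^n$ is abelian, $\out(\Z^n) = \aut(\Z^n) = \GL_n(\Z)$, so the task reduces to embedding $T' = (F_2\times\Z)\wr(\Z/3)$ into $\GL_n(\Z)$ for every $n \ge 12$. I would produce three pairwise-commuting copies of $F_2\times\Z$ inside $\GL_{12}(\Z)$ together with an order-$3$ matrix permuting them cyclically, and then --- exactly as at the end of \cref{embeds_into_Out(Fn)} --- pass from $\GL_{12}(\Z)$ to $\GL_n(\Z)$ for $n \ge 12$ through the injective block embedding $A \mapsto \left(\begin{smallmatrix} A & 0 \\ 0 & I_{n-12}\end{smallmatrix}\right)$.

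The building block is an embedding $F_2\times\Z \hookrightarrow \GL_4(\Z)$. For the $F_2$ factor I would use a rank-two free subgroup of $\SL_2(\Z)$ --- it is classical (Sanov) that $\left(\begin{smallmatrix}1&2\\0&1\end{smallmatrix}\right)$ and $\left(\begin{smallmatrix}1&0\\2&1\end{smallmatrix}\right)$ generate such a subgroup --- and fix an isomorphism $\rho$ of $F_2$ onto it; for the $\Z$ factor I would use $\gen{u} < \SL_2(\Z)$ with $u = \left(\begin{smallmatrix}1&1\\0&1\end{smallmatrix}\right)$. Placing these two factors in the two diagonal $2\times 2$ blocks of $\GL_4(\Z)$ yields the homomorphism $(w,k) \mapsto \left(\begin{smallmatrix}\rho(w) & 0 \\ 0 & u^{k}\end{smallmatrix}\right)$, which is injective because this matrix is $I_4$ precisely when $\rho(w) = I$ and $u^{k} = I$, i.e.\ when $w = 1$ and $k = 0$.

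Next I would embed $\GL_4(\Z)^{3}$ into $\GL_{12}(\Z)$ as block-diagonal matrices with three $4\times 4$ blocks, and let $C_0, C_1, C_2 < \GL_{12}(\Z)$ be the copies of $F_2\times\Z$ sitting in the three successive blocks (with the identity elsewhere). Matrices supported on distinct diagonal blocks commute, so the $C_i$ commute pairwise and generate their internal direct product $C_0\times C_1\times C_2 \cong (F_2\times\Z)^{3}$. Taking $P \in \GL_{12}(\Z)$ to be the permutation matrix that cyclically shifts the three blocks of four coordinates, one has $P^{3} = I$ and $P C_i P^{-1} = C_{i+1}$ (indices mod $3$), so conjugation by $P$ realises the cyclic permutation of the three factors. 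Since neither $P$ nor $P^{2}$ is block-diagonal, whereas every element of $C_0\times C_1\times C_2$ is, we have $\gen{P}\cap(C_0\times C_1\times C_2) = 1$; hence $\gen{C_0, C_1, C_2, P} = (C_0\times C_1\times C_2)\rtimes\gen{P} \cong (F_2\times\Z)\wr(\Z/3) = T'$. Combined with the stabilisation to $\GL_n(\Z)$ above, this gives $T' \hookrightarrow \GL_n(\Z) = \out(\Z^n)$ for all $n \ge 12$.

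I do not expect a real obstacle here: once one decides to parallel \cref{embeds_into_Out(Fn)}, the construction is essentially forced. The only points requiring a little care are the freeness of the chosen subgroup of $\SL_2(\Z)$ (which Sanov's theorem provides) and the verification that $\gen{C_0, C_1, C_2, P}$ is exactly the wreath product $T'$ rather than something larger --- and, as indicated, this is immediate from the block-diagonal description together with $P$ being a block permutation of order $3$.
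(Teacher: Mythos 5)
Your proof is correct and takes essentially the same approach as the paper: Sanov's free subgroup of $\SL_2(\Z)$ for the $F_2$ factor, a unipotent cyclic group for the $\Z$ factor, three block-diagonal commuting copies of $F_2\times\Z$ in $\GL_{12}(\Z)$, and a block-permutation matrix of order $3$ to realise the wreath product (the paper interleaves the six $2\times2$ blocks rather than grouping them into three $4\times4$ blocks, but this is cosmetic). You even make explicit the stabilisation $\GL_{12}(\Z)\hookrightarrow\GL_n(\Z)$ for $n>12$, which the paper leaves implicit.
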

	
	\begin{proof}
		The matrices
		$$A = \begin{pmatrix}
			1 & 2 \\
			0 & 1
		\end{pmatrix}, \quad
		B = \begin{pmatrix}
			1 & 0 \\
			2 & 1
		\end{pmatrix}$$
		generate a subgroup of $\GL_2(\Z)$
		isomorphic to $F_2$.
		Thus, the subgroup generated by
		$12\times 12$ block-diagonal matrices
		\begin{align*}
			a_0 = \diag(A, I, I, I, I, I),\
			a_1 = \diag(I, I, A, I, I, I),\
			a_2 = \diag(I, I, I, I, A, I), \\
			b_0 = \diag(B, I, I, I, I, I),\ 
			b_1 = \diag(I, I, B, I, I, I),\
			b_2 = \diag(I, I, I, I, B, I), \\
			c_0 = \diag(I, A, I, I, I, I),\
			c_1 = \diag(I, I, I, A, I, I),\
			c_2 = \diag(I, I, I, I, I, A)
		\end{align*}
		is actually isomorphic to
		$(F_2 \times \Z)\times (F_2 \times \Z)\times (F_2 \times \Z)$
		and if we additionally include the matrix $\sigma$,
		which permutes the blocks by 2 to the right,
		we get a subgroup isomorphic to $T'$.
	\end{proof}

	\section{Final results}\label{sec:final_results}
	
	In this section we combine the results proven in previous sections
	to prove \Cref{mainthm:general} -- a general tool
	for producing families of non-isomorphic semidirect products
	$N \rtimes F_m$ which share profinite completions.
	
	This, together with the constructions done in \cref{sec:T-systems},
	allows us to produce in \cref{mainthm:infinite_genus_families}
	three infinite families of non-isomorphic groups of type $F$
	sharing the same profinite completion.
	One should note that the assumptions
	that $n \ge 10$ for $N = F_n$,
	$n \ge 5$ for $N = \Sigma_n$,
	and $n \ge 12$ for $N = \Z^n$
	are chosen for convenience only
	and aren't implied to be minimal
	for which the phenomenon occurs.
	
	\addtocounter{thmx}{-2}
	\begin{thmx}\label{mainthm:general}
		Let $N$ be a
		finitely generated,
		residually finite group
		such that
		the centralisers $C_N(M)$
		of non-trivial normal subgroups $M \triangleleft N$
		don't contain a non-abelian free group.
		
		Let $T < \aut(N)$ be a subgroup which isn't free.
		Let $m \ge 2$ and $\{\varphi_i\colon F_m \twoheadrightarrow T\}$
		be a family of surjective homomorphisms such that,
		setting $\overline{\varphi_i}$ to be the composition of $\varphi_i$
		with ${\nu\colon \aut(N) \twoheadrightarrow \out(N)}$
		and $\overline T = \nu(T)$,
		we have
		\begin{equation*}
			\delta \circ \overline{\varphi_i} = \overline{\varphi_j} \circ \varepsilon\
			\text{for some $\delta \in \aut(\overline T)$,
				$\varepsilon \in \aut(F_m)$}\
			\iff\ i = j.
		\end{equation*}
		
		Then the groups $G_i := N \rtimes_{\varphi_i} F_m$
		are residually-finite
		and pairwise non-isomorphic,
		while having isomorphic profinite completions.
	\end{thmx}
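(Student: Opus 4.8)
The plan is to assemble the statement from the machinery already in place: \cref{semidirect_products_profinitely_isomorphic} for the profinite completions, \cref{semidirect_after_completion} for residual finiteness, and \cref{characteristic} for the non-isomorphism, with the only genuine work being to transport the conclusion of \cref{characteristic} along $\nu\colon\aut(N)\twoheadrightarrow\out(N)$.

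First I would handle the positive part. Each $\varphi_i$ is by hypothesis a surjection onto the same subgroup $T<\aut(N)$, so $\im\varphi_i = T = \im\varphi_j$ for all $i,j$; since $N$ is finitely generated, \cref{semidirect_products_profinitely_isomorphic} gives $\widehat{G_i}\cong\widehat{G_j}$. Moreover $N$ is residually finite by hypothesis and $F_m$ is residually finite, so \cref{semidirect_after_completion} shows each $G_i = N\rtimes_{\varphi_i}F_m$ is residually finite.

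For pairwise non-isomorphism, suppose $\beta\colon G_i\to G_j$ is an isomorphism. The hypotheses of \cref{characteristic} are satisfied --- $N$ has the prescribed centraliser property and $T$ is not free --- so $\beta(N) = N$, and restricting $\beta$ to $N$ and to the quotients $G_i/N\cong F_m\cong G_j/N$ yields $\alpha := \beta|_N\in\aut(N)$ and $\gamma\in\aut(F_m)$ with $\alpha\varphi_i\alpha^{-1} = \varphi_j\circ\gamma$. Applying $\nu$ and writing $\overline\alpha := \nu(\alpha)$, this becomes
\begin{equation*}
	\ad_{\overline\alpha}\circ\overline{\varphi_i} = \overline{\varphi_j}\circ\gamma \qquad\text{as maps } F_m\to\out(N).
\end{equation*}
Comparing the images of the two sides, and using that $\overline{\varphi_i}$ surjects onto $\overline T = \nu(T)$ while $\gamma\in\aut(F_m)$, we obtain $\overline\alpha\,\overline T\,\overline\alpha^{-1} = \overline{\varphi_j}(F_m) = \overline T$; hence $\delta := \ad_{\overline\alpha}|_{\overline T}$ is an automorphism of $\overline T$. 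With $\varepsilon := \gamma$ the displayed identity now reads $\delta\circ\overline{\varphi_i} = \overline{\varphi_j}\circ\varepsilon$, and the standing hypothesis forces $i = j$. Therefore the $G_i$ are pairwise non-isomorphic.

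I do not expect a real obstacle here, since the substantive arguments --- identifying $N\rtimes 1$ inside $G$ via centralisers of its normal subgroups, and the Gaschutz--Nielsen argument for the profinite completions --- are exactly \cref{characteristic} and \cref{semidirect_products_profinitely_isomorphic}, both already available. The one place to be careful is checking that conjugation by $\overline\alpha$ restricts to an automorphism of $\overline T$ (and not merely a map out of it), which is handled by the surjectivity of the $\overline{\varphi_i}$ onto $\overline T$.
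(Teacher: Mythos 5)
Your proof is correct and follows essentially the same route as the paper's: \cref{semidirect_products_profinitely_isomorphic} for the common profinite completion, \cref{semidirect_after_completion} for residual finiteness, and \cref{characteristic} to upgrade any isomorphism $G_i\to G_j$ to a pair $(\alpha,\gamma)\in\aut(N)\times\aut(F_m)$ conjugating $\varphi_i$ to $\varphi_j$. The one place you are actually more explicit than the paper is the passage from this conjugation in $\aut(N)$ to the $\aut(\overline T)\times\aut(F_m)$ relation in the hypothesis: the paper simply invokes \cref{different_maps_to_out}, whose hypothesis is phrased in terms of all $\overline\alpha\in\out(N)$, whereas you verify directly (via the image comparison $\overline\alpha\,\overline T\,\overline\alpha^{-1}=\overline T$) that such an $\overline\alpha$ necessarily induces an automorphism of $\overline T$, so the standing hypothesis applies. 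This is a worthwhile sanity check to spell out, but it is the same argument, just unfolded.
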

	
	\begin{proof}
		Group $N$ together with homomorphisms
		$\varphi_i\colon F_m \to T < \aut(N)$
		satisfy the assumptions of \cref{characteristic}
		and thus the family $\cD$ of extensions
		$$
		\big(
		\begin{tikzcd}[column sep=1.5em]
			1 \arrow{r}	&
			\underbrace{N \rtimes 1 \arrow{r}{\iota_i}}_N &
			\underbrace{N \rtimes_{\varphi_i} F_m}_{G_i} \arrow{r}{\pi_i}
			& F_m \arrow{r} & 1
		\end{tikzcd}
		\big)
		$$
		detects isomorphisms.
		\Cref{different_maps_to_out} then
		shows that the groups $G_i$
		are pairwise non-isomorphic.
		
		On the other hand,
		the fact that $\varphi_i$ all surject
		the same group $T < \aut(N)$
		proves that $\widehat{G_i}$
		are all isomorphic,
		as shown in \cref{semidirect_products_profinitely_isomorphic}.
	\end{proof}
	
	\begin{thmx}\label{mainthm:infinite_genus_families}
		Let $N$ be one of the following.
		\begin{itemize}
			\item Free group $F_n$ with $n \ge 10$.
			\item Fundamental group $\Sigma_n$
			of a closed orientable surface with genus $n \ge 5$, or
			\item Free abelian group $\Z^n$ with $n \ge 12$.
		\end{itemize}
		Then there exists an infinite family
		of non-isomorphic groups
		$G_i := N \rtimes_{\varphi_i} F_2$
		having isomorphic profinite completions.
		These groups are of type $F$.
	\end{thmx}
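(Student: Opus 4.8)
The plan is to deduce \cref{mainthm:infinite_genus_families} from \cref{mainthm:general} by exhibiting, for each of the three choices of $N$, a non-free subgroup $T<\aut(N)$ together with an infinite family of surjections $\varphi_i\colon F_2\twoheadrightarrow T$ whose composites $\overline{\varphi_i}=\nu\circ\varphi_i$ with $\nu\colon\aut(N)\twoheadrightarrow\out(N)$ are pairwise non-T-equivalent. In every case $T$ will be (a copy of) the torus link group $T(3,3)=\pres{x,y}{x^3=y^3}$, and the surjections will be the ones provided by \cref{torus_links_T_systems}.

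First I would check the standing hypothesis of \cref{mainthm:general} on $N$. All three groups are finitely generated and residually finite. If $N=\Z^n$, every centraliser is $\Z^n$, which contains no non-abelian free subgroup. If $N=F_n$ (with $n\ge 10$) or $N=\Sigma_n$ (with $n\ge 5$, so $\Sigma_n$ is a torsion-free non-elementary hyperbolic group), the centraliser of any non-trivial element is infinite cyclic; hence for a non-trivial normal $M\triangleleft N$ and any $1\ne m\in M$ one has $C_N(M)\le C_N(m)\cong\Z$, so again no $F_2$ appears. Thus the hypothesis holds in all three cases.

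Next I would build $T$ and the family $\{\varphi_i\}$. By \cref{T_is_free_by_cyclic} with $p=q=3$ (so $k=2$), $T(3,3)$ is an $(F_2\times\Z)$-by-$(\Z/3)$ extension, hence by the Universal Embedding Theorem (\cref{universal_embedding}) embeds into $T'=(F_2\times\Z)\wr(\Z/3)$. Then \cref{embeds_into_Out(Fn)}, \cref{embeds_into_Out(Sigma)} and \cref{embeds_into_GLnZ} give an embedding of $T'$, and therefore of $T(3,3)$, into $\out(N)$ for $N=F_n$ ($n\ge10$), $N=\Sigma_n$ ($n\ge5$) and $N=\Z^n$ ($n\ge12$) respectively, and in each case the embedding factors through $\nu$; this produces a copy $T<\aut(N)$ on which $\nu$ restricts to an isomorphism onto $\overline T:=\nu(T)$. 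Since $T(3,3)$ is non-abelian with non-trivial centre $\gen{x^3}$, it is not free, so $T$ is not free. As $3,3\ge2$ and $3+3\ge5$, \cref{torus_links_T_systems} yields infinitely many pairwise non-T-equivalent surjections $\psi_i\colon F_2\twoheadrightarrow T(3,3)$; transporting them along the isomorphism $T(3,3)\cong T$ gives surjections $\varphi_i\colon F_2\twoheadrightarrow T$ whose composites $\overline{\varphi_i}$ with $\nu$ satisfy
\begin{equation*}
	\delta\circ\overline{\varphi_i}=\overline{\varphi_j}\circ\varepsilon\ \text{ for some $\delta\in\aut(\overline T)$, $\varepsilon\in\aut(F_2)$}\quad\Longleftrightarrow\quad i=j,
\end{equation*}
because, under the identification $\overline T\cong T(3,3)$, this is precisely non-T-equivalence of the $\psi_i$. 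Invoking \cref{mainthm:general} with $m=2$ then gives the desired infinite family of residually finite, pairwise non-isomorphic groups $G_i=N\rtimes_{\varphi_i}F_2$ with isomorphic profinite completions.

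Finally I would record that the $G_i$ are of type $F$: each of $F_n$, $\Sigma_n$, $\Z^n$ and $F_2$ has a finite aspherical classifying space (a finite graph, a closed orientable surface, a torus), and the classifying space of a semidirect product $N\rtimes Q$ is the total space of a $BN$-bundle over $BQ$, hence finite and aspherical when both $BN$ and $BQ$ are; so every $G_i$ is of type $F$. The construction is essentially an assembly of the preceding sections, and I expect the only point needing genuine care to be the interface between the $\aut(N)$- and $\out(N)$-levels: one must use that the torus-link embedding factors through $\aut(N)$ and that $\nu$ restricts to an isomorphism on the resulting copy of $T(3,3)$, so that the non-T-equivalence supplied by \cref{torus_links_T_systems} actually translates into the hypothesis of \cref{mainthm:general} as phrased in terms of $\overline T<\out(N)$.
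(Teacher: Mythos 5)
Your proof takes essentially the same route as the paper: verify the centraliser hypothesis, embed $T(3,3)$ into $\out(N)$ via $T'=(F_2\times\Z)\wr(\Z/3)$ and the Universal Embedding Theorem, invoke \cref{torus_links_T_systems} for infinitely many non-T-equivalent surjections, and then apply \cref{mainthm:general}. You are also right that the delicate point is that the embedding factors through $\aut(N)\twoheadrightarrow\out(N)$ with $\nu$ injective on the copy of $T(3,3)$, which is exactly what \cref{embeds_into_Out(Fn)}, \cref{embeds_into_Out(Sigma)} and \cref{embeds_into_GLnZ} are engineered to provide.

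The one genuine divergence is the type-$F$ argument. The paper proceeds algebraically: it cites Wall's result that a finite free $\Z N$-resolution and a finite free $\Z Q$-resolution splice to a finite free $\Z G$-resolution, so any $N$-by-$Q$ extension with $N,Q$ of type $FL$ is of type $FL$, and then upgrades finitely presented + $FL$ to type $F$ via Brown. You instead argue topologically that a $K(G,1)$ is the total space of a $BN$-bundle over $BQ$, which is a finite aspherical complex when $BN$ and $BQ$ are. Both are standard and correct; the bundle argument is perhaps more vivid, while the $FL$ argument is what generalises most cleanly beyond the three cases at hand. Either is acceptable here.
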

	
	\begin{proof}
		The groups $N$ satisfying the assumptions
		are finitely generated and residually finite;
		their centralisers don't contain non-abelian free groups.
		
		In \cref{sec:T-systems} we proved that
		the torus link group $T = T(3, 3)$---%
		which isn't free---%
		embeds into $\out(N)$
		in all of the cases covered by the assumptions;
		furthermore, the embedding factors through
		$\aut(N) \twoheadrightarrow \out(N)$.
		
		Since $T$ has infinitely many non-T-equivalent surjections
		$\varphi_i: F_2 \twoheadrightarrow T$,
		all of the assumptions of \cref{mainthm:general}
		are satisfied,
		and so the groups $N \rtimes_{\varphi_i} F_2$
		form an infinite family of non-isomorphic groups
		which share the profinite completion.
		
		\cite{wall1961resolutions} shows
		that given finite resolutions of $\Z$
		by finitely-generated free
		$\Z N$- and $\Z Q$-modules respectively,
		we can construct a finite resolution of $\Z$
		by finitely-generated free $\Z G$-modules.
		This shows in particular that
		any extension of $N$ by $Q$ is of type $FL$.
		Finally, a finitely presented group of type $FL$
		is also of type $F$, by \cite[VIII.7.1]{brown2012cohomology}.
	\end{proof}
	
	
	\bibliographystyle{siam}
	\bibliography{Bibliography}
	
\end{document}